\newcommand{\BigWedge}{\mathord{\adjustbox{valign=B,totalheight=.6\baselineskip}{$\bigwedge$}}}
\@date \else {\vskip3ex \centering\footnotesize\@date\par\vskip1ex}\fi
\else \@footnotetext{\@setdate}\fi}
\newcommand{\Bv}{\operatorname{BVec}} 
\newcommand{\RR}{\mathbb{R}}
\newcommand{\QQ}{\mathbb{Q}}
\newcommand{\CC}{\mathbb{C}}
\newcommand{\CP}{\mathbb{CP}}
\newcommand{\HH}{\mathbb{H}}
\newcommand{\ZZ}{\mathbb{Z}}
\newcommand{\rank}{\operatorname{rank}}
\newcommand{\bq}{/\!\!/}
\newcommand{\diag}{\operatorname{diag}}
\newcommand{\B}{(S^3)^2\bq T^2}
	\newcommand*{\defeq}{\mathrel{\vcenter{\baselineskip0.5ex \lineskiplimit0pt
				\hbox{\scriptsize.}\hbox{\scriptsize.}}}%
		=}
\newtheorem{theorem}{Theorem}[section]
\newtheorem{corollary}[theorem]{Corollary}
\newtheorem{lemma}[theorem]{Lemma}
\newtheorem{proposition}[theorem]{Proposition}
\newtheorem*{theorem*}{Theorem}
\theoremstyle{remark}
\newtheorem{remark}[theorem]{Remark}
\theoremstyle{definition}
\newtheorem{definition}[theorem]{Definition}
\newtheorem{example}[theorem]{Example}
\def\equalsfill{$\m@th\mathord=\mkern-7mu
\cleaders\hbox{$\!\mathord=\!$}\hfill
\mkern-7mu\mathord=$}
\begin{document}

\abovedisplayskip=6pt plus3pt minus3pt
\belowdisplayskip=6pt plus3pt minus3pt

\title[The tangent bundle of biquotients]{Examples of biquotients whose tangent bundle is not a biquotient vector bundle}

\author[Michael Albanese]{Michael Albanese}
\address{University of Adelaide, South Australia, Australia}
\email{michael.albanese@adelaide.edu.au}

\author[Jason DeVito]{Jason DeVito}
\address{The University of Tennessee at Martin, Tennessee, USA}
\email{jdevito1@utm.edu}

\author[David Gonz\'alez-\'Alvaro]{David Gonz\'alez-\'Alvaro}
\address{Universidad Polit\'ecnica de Madrid, Spain}
\email{david.gonzalez.alvaro@upm.es}

\thanks{2010 \it  Mathematics Subject classification.\rm\ 
Primary 53C21; Secondary 57R22, 57T15.}

\thanks{The second author received support from NSF DMS-2105556. The third author received support from MINECO grant MTM2017-85934-C3-2-P (Spain).}

\begin{abstract}  A \textit{biquotient vector bundle} is any vector bundle over a biquotient $G\bq H$ of the form $G\times_{H} V$ for an $H$-representation $V$.  Over most biquotients, biquotient vector bundles are the only vector bundles known to admit metrics of non-negative sectional curvature, and hence they play a crucial role in the context of the converse to the Soul Theorem of Cheeger and Gromoll.  In this article, we study the question of when the tangent bundle of $G\bq H$ is a biquotient vector bundle. We find infinite families of examples of biquotients $M\cong G\bq H$ for which the tangent bundle is not a biquotient vector bundle for any presentation of $M$ as a biquotient.  In addition, we find infinite families of manifolds which arise as biquotients in two ways: one for which the tangent bundle is a biquotient bundle, and one for which it is not. Some of these results depend on an observation of Hirzebruch which relates the signature and Euler characteristic of an almost complex manifold. We include a proof of this fact as it seems to be missing from the literature.
\end{abstract} 

\maketitle

\thispagestyle{empty}

\section{Motivation and results}

In \cite{CG72}, Cheeger and Gromoll proved the Soul Theorem which asserts that a non-compact manifold $N$ equipped with a complete metric of non-negative sectional curvature must have a soul.  That is, $N$ must contain a totally convex closed submanifold $M$ for which $N$ has the structure of a vector bundle over $M$.  Being totally convex, the metric on $N$ restricts to a non-negatively curved metric on $M$.  The converse to the Soul Theorem asks which vector bundles over a closed non-negatively curved Riemannian manifold admit complete metrics of non-negative sectional curvature. The motivation for this article is to study the converse to the Soul Theorem for the tangent bundle $TM$ of a biquotient $M \cong G\bq H$ with $G$ compact.

Let us describe a certain class of vector bundles over a biquotient which will be considered in this article. Associated to any biquotient $G\bq H$ there is an $H$-principal bundle $H\rightarrow G\rightarrow G\bq H$. Each $H$-representation $V$ induces a vector bundle $G\times_H V:= (G\times V)/H$. We call a vector bundle of the form $G\times_H V$  a \textit{biquotient vector bundle} and denote the set of isomorphism classes of biquotient vector bundles as $\Bv(G\bq H)$.

The most relevant feature of biquotient vector bundles is that all of them carry metrics of non-negative sectional curvature. Indeed, as both $G$ and $V$ admit $H$-invariant non-negatively curved Riemannian metrics, the Gray-O'Neill formula \cite{Gr67,On1} implies that the submersion metric on $G\times_H V$ is non-negatively curved. Moreover, for most biquotients $G\bq H$, the only vector bundles known to admit non-negatively curved metrics are precisely those in $\Bv(G\bq H)$. Thus it is of crucial importance in the context of the converse to the Soul Theorem to determine which bundles over $G\bq H$ belong to $\Bv(G\bq H)$. The goal of this article is to investigate under what conditions the tangent bundle of a biquotient is isomorphic to a biquotient vector bundle.

Given a manifold $M$, we say that a biquotient $G\bq H$ is a \textit{presentation} for $M$ if there is a diffeomorphism $M\cong G\bq H$. Any biquotient admits infinitely many presentations $G_1\bq H_1\cong G_2\bq H_2\cong G_3\bq H_3\cong\dots$, so it is natural to ask how the sets $\Bv(G_i\bq H_i)$ are related to each other. It is well known that the tangent bundle of a homogeneous space $G/H$ is isomorphic to $G\times_H\mathfrak{p}$, where $\mathfrak{p}$ denotes the isotropy action of $H$. In the terminology of the present article, this means that $TM\in \Bv(G/H)$ for any homogeneous presentation $G/H$ of $M$. Our first result shows that this is no longer true for arbitrary biquotient presentations.

\begin{theorem}\label{thm:presentations_of_HP}  Suppose $M = \mathbb{C}P^k$ or $M = \mathbb{H}P^k$ and that $M\neq \mathbb{C}P^1$. Then $M$ admits a biquotient presentation $M \cong G\bq H$, with $G$ simply connected, for which $TM\notin \Bv(G\bq H)$.

\end{theorem}

As we show in Proposition \ref{prop:cover}, the set of isomorphism classes of  biquotient vector bundles can only be enlarged when passing to a cover of $G$.  Thus, the assumption that $G$ is simply connected in Theorem \ref{thm:presentations_of_HP} allows for the largest possible class of biquotient vector bundles.


Note that each of the manifolds $M$ in Theorem~\ref{thm:presentations_of_HP} admit presentations $G\bq H$ for which $TM\in\Bv(G/H)$: one can take any homogeneous presentation. This observation leads to two comments. First, Theorem~\ref{thm:presentations_of_HP} shows that whether or not $TM\in\Bv(G\bq H)$ can depend on the presentation $G\bq H$ of $M$ as a biquotient, and consequently the set $\Bv(G\bq H)$ depends on the presentation. Second, it might suggest that every biquotient has some presentation for which the tangent bundle is a biquotient vector bundle. It turns out that the latter does not hold for arbitrary biquotients, as our second result shows.

\begin{theorem}\label{thm:first_theorem} In each dimension $4,6$ and $16n+4\geq 20$, there is a simply connected closed biquotient $M$ for which $TM\notin \Bv(G\bq H)$ for any presentation $G\bq H$ of $M$ as a biquotient.  Moreover, in dimension 6, there are infinitely many homotopy types of such examples.

\end{theorem}

Given the very limited number of existing tools to endow non-compact manifolds with non-negatively curved metrics, Theorem~\ref{thm:first_theorem} indicates that new ideas are required to construct such a metric on the tangent bundle of an arbitrary biquotient.

Let us outline the proofs of the theorems above. The proof for the biquotients in Theorem~\ref{thm:presentations_of_HP} is very similar to that for the biquotients in dimensions $16n+4$ from Theorem~\ref{thm:first_theorem}. All these biquotients  have a presentation of the form $Spin(p)\bq (Spin(q)\times L)$, for certain $p>q$ and rank one Lie group $L$, see Section~\ref{S:examples_spin} for the concrete embeddings. These biquotients were discovered by Eschenburg \cite{Es84}, and Kapovitch and Ziller showed in \cite{KZ04} that those from Theorem~\ref{thm:first_theorem} have the same integral cohomology groups as $\HH P^{2k-1}$ but a different ring structure. The proof that the tangent bundle is not a biquotient bundle involves an analysis of the representations of the corresponding group $Spin(q)\times L$ together with the non-existence of almost complex structures on these spaces. The non-existence follows from a result of Hirzebruch, which we state as Theorem \ref{thm:Hirzebruch}. 

\begin{theorem}[Hirzebruch]\label{thm:Hirzebruch}
Suppose $M$ is a closed $4m$-dimensional manifold which admits an almost complex structure, then $\chi(M) \equiv (-1)^m\sigma(M) \bmod 4$.
\end{theorem}

The above statement first appeared in the commentary of Hirzebruch's collected works \cite[page 777]{HirCW}, where it was pointed out that the result follows from properties of the $\chi_y$ genus. We collect these properties and include a proof of Hirzebruch's theorem in Section~\ref{appendix}.


The 4 and 6-dimensional examples of Theorem~\ref{thm:first_theorem} are of the form $G\bq T$ where $G$ is simply connected and $T$ is a torus with $\rank T =\rank G$, and include the connected sum $\B\approx\CP^2\#\CP^2$ discovered by Totaro \cite {To02} and Eschenburg's positively curved inhomogeneous $6$-manifold $SU(3)\bq T^2$ \cite{Es84}. The proof uses the theory of characteristic classes. As 
Hepworth proved in his thesis \cite[Proposition~5.3.9]{He05}, for any biquotient of the form $G\bq T$ (with $T$ a torus of any rank), the tangent bundle is stably isomorphic to a biquotient vector bundle. We do not know whether Hepworth's result holds for arbitrary biquotients.  However we are able to give the following partial answer which only depends on the topology rather than the presentation.  

 
\begin{theorem}\label{thm:sufficient_conditions_stably_biquotient}
Let $M = G\bq H$ be a closed biquotient. If $\oplus_{i>0} H^{4i}(M,\QQ)=0$, then the tangent bundle $TM$ is stably isomorphic to a biquotient vector bundle $G\times_H V$ for some $H$-representation $V$. 
\end{theorem}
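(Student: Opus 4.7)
The plan is to describe $[TM]$ in $\KO(M)$ as a virtual difference of biquotient bundle classes, and then use the cohomological hypothesis to rewrite it as the class of an honest biquotient bundle. For the first step, since $H$ is compact, averaging produces an $H$-invariant Riemannian metric on $G$, which yields an $H$-equivariant splitting
\[
TG \cong V \oplus \pi^{*}T(G\bq H),
\]
where $V$ is the vertical subbundle of the principal $H$-bundle $\pi\co G \to G\bq H$. Via the left-translation trivialization $TG\cong G\times\mathfrak g$, one checks that the induced $H$-action becomes $(h_1,h_2)\cdot(g,X) = (h_1gh_2^{-1},\operatorname{Ad}(h_2)X)$; via the fundamental-vector-field trivialization of $V$, the fibrewise $H$-action on $\mathfrak h$ is the adjoint representation. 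Quotienting by $H$ gives a bundle isomorphism $G\times_H\mathfrak g \cong T(G\bq H)\oplus (G\times_H\mathfrak h)$, whence
\[
[TM] = [G\times_H\mathfrak g] - [G\times_H\mathfrak h] \quad\text{in}\quad \KO(M).
\]

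Next, I use the hypothesis to convert this virtual expression into an actual one. The Pontryagin character induces a natural rational isomorphism $\KO(M)\otimes\QQ \cong \bigoplus_{i\geq 0}H^{4i}(M,\QQ)$ for finite CW complexes, so the assumption $\bigoplus_{i>0}H^{4i}(M,\QQ) = 0$ forces $\widetilde{\KO}(M)$ to be torsion. Consequently, the reduced class $[G\times_H\mathfrak h] - \dim\mathfrak h$ has finite order $k$ in $\widetilde{\KO}(M)$; equivalently, $-[G\times_H\mathfrak h] = (k-1)[G\times_H\mathfrak h] - k\dim\mathfrak h$ in $\KO(M)$. Substituting into the formula of the previous paragraph and collecting terms yields
\[
[TM] = \bigl[G\times_H\bigl(\mathfrak g \oplus (k-1)\mathfrak h\bigr)\bigr] - k\dim\mathfrak h.
\]
With $V := \mathfrak g \oplus (k-1)\mathfrak h$, an honest $H$-representation, this reads $[TM] - \dim TM = [G\times_H V] - \dim V$ in $\widetilde{\KO}(M)$, which is precisely the stable isomorphism of $TM$ with the biquotient vector bundle $G\times_H V$.

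The crux of the argument is the identification in the first paragraph, which requires careful bookkeeping of $H$-equivariance and a precise description of the induced $H$-representations on the fibres of the two trivializations; compactness of $H$ enters essentially, both for the invariant metric and for the adjoint action on $\mathfrak h$. The remainder of the proof is a formal manipulation of torsion classes in $\KO(M)$ enabled by the Pontryagin character.
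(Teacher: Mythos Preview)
Your proof is correct and follows the same two-step strategy as the paper: first express $[TM]$ as a virtual difference of biquotient-bundle classes, then use the cohomological hypothesis to clear the negative term. The differences lie in how each step is executed. For the first step, the paper quotes the Singhof--Kerin splitting $T(G\bq H)\oplus\alpha_H\cong\alpha_G$ and then invokes Proposition~\ref{prop:reduced} (the reduced-presentation lemma applied to Eschenburg's presentation $G\backslash(G\times G)/H$) to recognize $\alpha_G$ as a biquotient bundle for the original presentation; you instead compute the $H$-action on $\mathfrak g$ directly under the left-trivialization of $TG$, which is more elementary and identifies the representation explicitly as $\operatorname{Ad}\circ f_2$. (One subtlety worth noting: since $f$ only lands in $(G\times G)/Z$, the map $f_2$ is a priori only well defined into $G/Z(G)$, but $\operatorname{Ad}$ factors through $G/Z(G)$, so the representation is honest.) For the second step, the paper cites \cite[Theorem~1.3]{DG21} to produce a biquotient-bundle inverse of $\alpha_H$; your torsion argument in $\widetilde{\KO}(M)$ via the Pontryagin character is essentially the same mechanism made self-contained, yielding the explicit inverse $(k-1)(G\times_H\mathfrak h)$ up to trivial summands. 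In short, your argument unpacks the paper's citations into direct computations, at the cost of a little extra bookkeeping but with the benefit of being self-contained.
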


Theorem~\ref{thm:sufficient_conditions_stably_biquotient} follows as an easy combination of various results. We use work of Singhof \cite{Si93} together with our results from Section~\ref{sec:structure} to conclude that the tangent bundle is stably isomorphic to the Whitney sum of a biquotient bundle and the inverse of another biquotient bundle. The topological condition in Theorem~\ref{thm:sufficient_conditions_stably_biquotient} was found by the second two authors in the recent paper \cite{DG21} to ensure that every biquotient bundle has an inverse which is also a biquotient bundle.  Theorem~\ref{thm:sufficient_conditions_stably_biquotient} has the following implication for the converse to the Soul Theorem:

\begin{corollary}
For each closed biquotient $M = G\bq H$ with $\oplus_{i>0} H^{4i}(M,\QQ)=0$ there is some $k$ for which the product manifold $TM\times\RR^k$ admits a metric of non-negative sectional curvature.
\end{corollary}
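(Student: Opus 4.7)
The plan is to chain Theorem~\ref{thm:sufficient_conditions_stably_biquotient} with the curvature property of biquotient vector bundles noted earlier in the introduction. First I would apply Theorem~\ref{thm:sufficient_conditions_stably_biquotient} to obtain an $H$-representation $V$ such that $TM$ and $G\times_H V$ are stably isomorphic. Unwinding the definition of stable isomorphism, this yields a non-negative integer $k$ for which
\[
TM\oplus\epsilon^k \;\cong\; (G\times_H V)\oplus\epsilon^k
\]
as vector bundles over $M$, where $\epsilon^k$ denotes the trivial rank-$k$ bundle.

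Next I would absorb the trivial summand on the right into a new biquotient bundle. If $\RR^k$ is equipped with the trivial $H$-action, then $G\times_H(V\oplus\RR^k)\cong (G\times_H V)\oplus\epsilon^k$. Setting $W\defeq V\oplus\RR^k$, the isomorphism above becomes $TM\oplus\epsilon^k\cong G\times_H W$ as bundles over $M$.

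Finally I would pass from bundle isomorphism to a diffeomorphism of total spaces and transport the curvature. The total space of $TM\oplus\epsilon^k$ is canonically diffeomorphic to $TM\times\RR^k$, while as recalled in the introduction, the total space of the biquotient vector bundle $G\times_H W$ carries a Riemannian metric of non-negative sectional curvature constructed from any bi-invariant metric on $G$ and any $H$-invariant inner product on $W$. Pulling this metric back along the diffeomorphism $TM\times\RR^k\cong G\times_H W$ gives the required metric on $TM\times\RR^k$.

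There is essentially no obstacle here; the proof is a direct concatenation of Theorem~\ref{thm:sufficient_conditions_stably_biquotient} and the standard non-negative curvature construction on associated bundles. The only point that merits care is the bookkeeping in the passage from \emph{stable} isomorphism of bundles to an \emph{honest} bundle isomorphism after adding a trivial summand, and the observation that this trivial summand can itself be realized as part of a biquotient bundle via the trivial $H$-representation on $\RR^k$.
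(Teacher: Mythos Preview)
Your argument is correct and matches the paper's intended approach: the corollary is stated there as an immediate consequence of Theorem~\ref{thm:sufficient_conditions_stably_biquotient} together with the fact that biquotient vector bundles carry non-negatively curved metrics, and your write-up simply spells this out. One tiny bookkeeping remark: stable isomorphism only guarantees $TM\oplus\epsilon^{k_1}\cong (G\times_H V)\oplus\epsilon^{k_2}$ for some $k_1,k_2\geq 0$ (since $\dim V$ need not equal $\dim M$), but this changes nothing---you still absorb $\epsilon^{k_2}$ into the biquotient bundle and conclude that $TM\times\RR^{k_1}$ has non-negative curvature.
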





Finally, we analyze the tangent bundle of simply connected closed biquotients $G\bq H$ of dimension at most $5$, which were classified by the second author and Pavlov \cite{De14, Pa04}. Recall that such a biquotient is diffeomorphic to one of the following spaces: $S^2$, $S^3$, $S^4$, $\CC P^2$, $S^2\times S^2$, $\mathbb{C}P^2\#\pm\mathbb{C}P^2$, $S^5$, $S^2\times S^3$, the Wu space $SU(3)/SO(3)$ or the non-trivial $S^3$-bundle over $S^2$, denoted  $S^3\,\widehat{\times}\, S^2$.


\begin{theorem}\label{thm:dims235}
Let $M$ be a simply connected closed biquotient of dimension at most $5$  other than $\mathbb{C}P^2\# \mathbb{C}P^2$, $S^4$ or $\mathbb{C}P^2$. Then $TM\in \Bv(G\bq H)$ for any reduced presentation of $M \cong G\bq H$ with $G$ simply connected.

On the other hand, if $M = S^4$ or $\mathbb{C}P^2$, then $TM$ is a biquotient bundle for some reduced presentations of $M \cong G\bq H$ with $G$ simply connected, and is not for others. If $M = \mathbb{C}P^2\#\mathbb{C}P^2$, then $TM\notin \Bv(G\bq H)$ for any presentation $M \cong G\bq H$.
\end{theorem}

A biquotient $G\bq H$ with $G = G_1\times G_2$ is called \textit{reduced} if $H$ does not act transitively on any simple factor of $G$.  Every biquotient has a reduced form. Among biquotient presentations, reduced presentations allow for the largest class of biquotient vector bundles, see Proposition \ref{prop:reduced}.



The positive results from Theorem~\ref{thm:dims235} imply that $TM$ admits a metric of non-negative sectional curvature for all simply connected closed biquotients $M$ of dimension at most $5$, except possibly when $M\cong\mathbb{C}P^2\#\mathbb{C}P^2$. This fact was already known: when $M$ is one of the two inhomogeneous examples $\mathbb{C}P^2\# - \mathbb{C}P^2$ and $S^3\, \widehat{\times}\, S^2$ one can use the existence of cohomogeneity one actions with codimension 2 singular orbits on $M$ in order to equip $TM$ with a non-negatively curved metric, see the articles by Grove and Ziller \cite{GroveZiller} and by Amann, Zibrowius and the third author \cite{AGZ} for details. In view of the current state of knowledge, $\mathbb{C}P^2\#\mathbb{C}P^2$ is the only known closed, simply connected, non-negatively curved manifold of dimension at most $5$ for which it is unknown whether its tangent bundle admits a non-negatively curved metric.

\textbf{Organization of the article. }In Section~\ref{sec:structure} we begin with some background information and then we prove some structure results regarding biquotient vector bundles. Specifically, we show that the largest class of biquotient vector bundles is obtained when $G$ is simply connected (Proposition~\ref{prop:cover}) and when $G$ is reduced (Proposition~\ref{prop:reduced}). Armed with these structure results, we prove Theorem~\ref{thm:sufficient_conditions_stably_biquotient} at the end of Section 2, while Section \ref{appendix} contains a proof of Theorem \ref{thm:Hirzebruch}. Section~\ref{S:examples_spin} is devoted to proving Theorem~\ref{thm:presentations_of_HP} and Theorem~\ref{thm:first_theorem} for dimensions of the form $16n+4$, with the remaining cases of Theorem~\ref{thm:first_theorem}, namely dimensions $4$ and $6$, appearing in Section~\ref{S:examples_torus}. Finally, in Section~\ref{S:low_dims} we prove Theorem~\ref{thm:dims235}.

\textbf{Acknowledgements. } We thank Richard Hepworth for helpful conversations, and Manuel Amann and Wolfgang Ziller for useful comments on an earlier draft of this article. We would also like to thank an anonymous referee for numerous helpful comments.


\section{Background and the set of biquotient vector bundles}\label{sec:structure}

\subsection{Definitions and notation}\label{SS:definitions}

Let us first recall the definition of a biquotient, following the approach by Totaro in \cite[Lemma~1.1~(3)]{To02}.

\begin{definition}\label{def:biq}  Let $G$ be a compact Lie group and $Z(G)$ its center, and let $Z\subseteq G\times G$ be the diagonal normal subgroup $Z\defeq \{(g,g) : g\in Z(G)\}$.  Any homomorphism $f\colon H\to (G\times G)/Z$ can be written in the form $f(h)=[f_1(h),f_2(h)]$ and determines a well-defined two-sided $H$-action $\star$ on $G$ by the rule $h\star g=f_1(h)gf_2(h)^{-1}$. When this action is free, the orbit space, denoted by $G\bq H$, inherits a manifold structure and is called a \emph{biquotient}.  

\end{definition}

When the homomorphism $f\colon H\to (G\times G)/Z$ is given by a subgroup inclusion $H\subseteq \{e\}\times G\subseteq G\times G$, the resulting space is homogeneous and denoted by $G/H$. 

More generally, when $H$ has the form $H=H_1\times H_2$ and the homomorphism $f\colon H\to (G\times G)/Z$ is given by subgroup inclusions $H_i\subseteq G$, we will often denote the resulting space by $H_1\backslash G/H_2$ instead of $G\bq (H_1\times H_2)$.

Suppose $G\bq H$ is a biquotient with $G = G_1\times G_2$ and that the projection of the $H$-action to $G_1$ is transitive.  Then, as shown by Totaro \cite[Lemma 3.3]{To02}, one can find another presentation for $G\bq H$ of the form $G_2\bq \hat{H}$ where $\hat{H}\subseteq H$.  This process can be repeated to obtain a presentation $\tilde{G}\bq \tilde{H}$ for which $\tilde{H}$ does not act transitively on any simple factor of $\tilde{G}$.

\begin{definition}  A biquotient $G\bq H$ is called \textit{reduced} if for any simple factor $G_i\subseteq G$, the projection of the $H$-action to $G_i$ is not transitive.
\end{definition}

By construction, associated to a biquotient $G\bq H$ there is an $H$-principal bundle $H\to G\to G\bq H$. Each representation $V$ of $H$ induces a vector bundle $G\times_H V$ over $G\bq H$, defined as the quotient of $G\times V$ via the diagonal action by $H$ consisting of the biquotient $H$-action on $G$ and the representation action on $V$; the projection map is given by $[g,v]\mapsto [g]$.

\begin{definition} A vector bundle over a biquotient $G\bq H$ which is isomorphic to one of the form $G\times_H V$ for an $H$-representation $V$ is called a \emph{biquotient vector bundle}, or just \emph{biquotient bundle} for simplicity. A biquotient vector bundle is called \textit{real} if $V$ is a real vector space and is called \textit{complex} if $V$ is a complex vector space.
\end{definition}

\begin{definition}  Given a biquotient $ G\bq H$, the set of isomorphism classes of biquotient vector bundles is denoted by $\Bv(G\bq H)$.
\end{definition}

As already mentioned in the Introduction, given a biquotient $G\bq H$, one can find other biquotients $G'\bq H'$ diffeomorphic to $G\bq H$. However, the notation $\Bv(G\bq H)$ always refers to bundles of the form $G \times_H V$.


\subsection{Structure results}
Our first structure result (Proposition~\ref{prop:cover} below) indicates that if we pull the biquotient structure $G\bq H$ back along a covering $\pi : G'\rightarrow G$, the set of biquotient vector bundles can only increase.

Suppose $Z' = \{(g,g)\in G'\times G': g\in Z(G')\}$ denotes the diagonal center of $G'\times G'$.  We note that the product map $\pi\times \pi:G'\times G'\rightarrow G\times G$  maps $Z'$ to $Z$ and hence descends to a covering $\psi:(G'\times G')/Z'\rightarrow (G\times G)/Z$. We include a proof for completeness.

\begin{proposition}  The map $\psi$ is a covering map.

\end{proposition}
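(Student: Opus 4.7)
The plan is to realize $\psi$ as the composition of a finite covering with a diffeomorphism. I will assume the groups are compact connected, as is standard for biquotients.

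First I would set $K := \ker \pi$, which is finite and central in $G'$ (discrete because $\pi$ is a covering, central because $G'$ is connected and $K$ is normal discrete). Then $\pi \times \pi : G' \times G' \to G \times G$ is a covering with finite kernel $K \times K$, and $\psi$ is well-defined because $\pi(Z(G')) \subseteq Z(G)$, so $(\pi\times\pi)(Z') \subseteq Z$.

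The key auxiliary subgroup is $Q := (\pi\times\pi)^{-1}(Z) \subseteq G' \times G'$. Using that $\pi^{-1}(Z(G)) = Z(G')$ when $G'$ is connected (a standard commutator argument: for $g' \in \pi^{-1}(Z(G))$ the continuous map $h' \mapsto g' h' g'^{-1} h'^{-1}$ takes values in the discrete set $K$ and vanishes at $h' = e$, hence is constantly $e$), I would identify
\[
Q = \{(z', z'k) : z' \in Z(G'),\ k \in K\}.
\]
In particular $Z' \subseteq Q$, and the homomorphism $Q \to K$ sending $(z', z'k) \mapsto k$ has kernel $Z'$, realizing $Q/Z' \cong K$ as a finite group.

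Then $\psi$ factors as
\[
(G'\times G')/Z' \xrightarrow{\ q\ } (G'\times G')/Q \xrightarrow{\ \bar\psi\ } (G\times G)/Z,
\]
where $q$ is the quotient by the finite normal subgroup $Q/Z'$ (hence a covering map), and $\bar\psi$ is the map induced by $\pi\times\pi$. By construction $\bar\psi$ is a continuous bijection (injectivity being exactly the condition defining $Q$), and it is a local diffeomorphism because $\pi\times\pi$ is; so $\bar\psi$ is a diffeomorphism, and the composition $\psi = \bar\psi\circ q$ is a covering.

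The main obstacle is the algebraic identification of $Q$, which rests on the equality $\pi^{-1}(Z(G)) = Z(G')$; this is where the connectedness hypothesis is genuinely used. The rest is general quotient-by-finite-normal-subgroup manipulation.
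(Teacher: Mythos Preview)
Your proof is correct and takes a somewhat different route from the paper's. The paper argues directly that $\ker\psi$ is discrete by exhibiting a surjection $\eta:\ker(\pi\times\pi)\to\ker\psi$ from the finite group $\ker(\pi\times\pi)$, and then invokes the fact that a surjective Lie group homomorphism with discrete kernel is a covering. You instead factor $\psi$ explicitly as a finite quotient $(G'\times G')/Z'\to (G'\times G')/Q$ followed by a diffeomorphism $(G'\times G')/Q\to (G\times G)/Z$. Your approach buys a little more: it identifies the deck group of $\psi$ concretely as $Q/Z'\cong K=\ker\pi$. It is also more explicit about the equality $\pi^{-1}(Z(G))=Z(G')$, which you justify via the standard connectedness/commutator argument; the paper's proof actually uses the same fact implicitly (in the step ``since $\pi\times\pi$ is surjective, it maps $Z'$ surjectively onto $Z$'', which needs $\pi(Z(G'))\supseteq Z(G)$) but does not spell it out. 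The one place your write-up could be tightened is the assertion that $\bar\psi$ is a local diffeomorphism ``because $\pi\times\pi$ is'': the cleanest way to see this is that $(\text{quotient by }Z)\circ(\pi\times\pi)$ is a submersion, hence so is $\bar\psi\circ(\text{quotient by }Q)$, and since the quotient by $Q$ is a surjective submersion, $\bar\psi$ itself is a bijective submersion and therefore a diffeomorphism.
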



\begin{proof} Let $\eta:G\times G\rightarrow (G\times G)/Z$ denote the natural projection and similarly define $\eta':G'\times G'\rightarrow (G'\times G')/Z'$.  We obviously have $\psi \circ \eta' = \eta \circ (\pi\times \pi)$.   It follows that $\eta'$ maps $\ker (\pi\times \pi)$ to $\ker \psi$.  Thus, we obtain the following commutative diagram:

\begin{diagram} \ker (\pi \times \pi) & \rTo& \ker \psi\\ \dTo & & \dTo\\ G'\times G' &\rTo^{\eta'} & (G'\times G')/Z'\\ \dTo_{\pi\times\pi} & & \dTo_{\psi} \\
G\times G&\rTo^{\eta} & (G\times G)/Z.
\end{diagram}

We claim that $\eta'|_{\ker (\pi\times \pi)}:\ker (\pi\times \pi)\rightarrow \ker \psi$ is a surjective map of Lie groups.  Believing this claim, since $\ker (\pi\times \pi)$ is discrete, it will follow that $\ker \psi$ is discrete, so $\psi$ is a covering.

To see that $\eta'|_{\ker (\pi \times \pi)}:\ker (\pi\times \pi)\rightarrow \ker \psi$ is surjective, let $(a,b)Z'\in \ker \psi$.  Thus, $(\pi\times \pi)(a,b)\in Z$.  Since $\pi \times \pi$ is surjective, it maps $Z'$ surjectively onto $Z$, so there is an element $(z',z')\in Z'$ with $(\pi\times \pi)(a,b) = (\pi\times \pi)(z',z')$.  Then $(a,b)(z',z')^{-1}\in \ker (\pi\times \pi)$ and $\eta'((a,b)(z',z')^{-1}) = (a,b)Z'$.
\end{proof}

Next set $H' = \psi^{-1}(f(H))$, where $f:H\to (G\times G)/Z$ denotes the homomorphism defining the biquotient $G\bq H$. Since both $f$ and $\psi$ are group homomorphisms, it follows that $H'$ is a subgroup of $(G'\times G')/Z'$, and the corresponding inclusion map induces a free biquotient action of $H'$ on $G'$, denoted by $\star'$. Moreover, $\pi$ induces a diffeomorphism $G'\bq H'\rightarrow G\bq H$ (as shown by Totaro \cite[Lemma~3.1]{To02}). With this in mind, we have the following proposition.



\begin{proposition}\label{prop:cover} Let $G\bq H$ be a biquotient, $\pi:G'\rightarrow G$ a covering map, and $H'$ as defined above so that $G\bq H$ is diffeomorphic to $G'\bq H'$. Then  $\Bv(G\bq H)\subseteq \Bv(G'\bq H')$.  That is, every vector bundle of the form $G\times_H V$ is isomorphic to a vector bundle of the form $G'\times_{H'} V'$.

\end{proposition}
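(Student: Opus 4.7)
The plan is to build, from any $H$-representation $V$, an $H'$-representation $W$ on the same underlying vector space, and then construct a bundle isomorphism $G' \times_{H'} W \cong G \times_H V$ covering the diffeomorphism $G' \bq H' \cong G \bq H$ induced by $\pi$.

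First, I would observe that the defining homomorphism $f \co H \to (G \times G)/Z$ must be injective. Indeed, any element of $\ker f$ acts trivially on $G$ under $\star$, and freeness of the biquotient action forces $\ker f = \{e\}$. Thus $f$ restricts to a Lie group isomorphism onto $f(H)$. Since $\psi$ is a covering map, its restriction $\psi|_{H'} \co H' = \psi^{-1}(f(H)) \to f(H)$ is surjective, so $\rho \defeq f^{-1} \circ \psi|_{H'} \co H' \to H$ is a surjective Lie group homomorphism. Let $W$ denote $V$ equipped with the pulled-back $H'$-representation $H' \xrightarrow{\rho} H \to \GL(V)$.

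Next, I would verify that the smooth map $G' \times V \to G \times V$, $(g', v) \mapsto (\pi(g'), v)$, is equivariant for the diagonal actions. Choose $h' \in H'$ with lift $(a', b') \in G' \times G'$, so that $h' = (a', b')Z'$. By construction $\psi(h') = f(\rho(h'))$, and applying $\psi$ yields $f(\rho(h')) = (\pi(a'), \pi(b'))Z$. Then
\begin{equation*}
\pi(h' \star' g') = \pi(a') \pi(g') \pi(b')^{-1} = \rho(h') \star \pi(g'),
\end{equation*}
while the $H'$-action on $v \in W$ is by definition the $H$-action on $v \in V$ via $\rho$. The map therefore descends to a bundle map $\Phi \co G' \times_{H'} W \to G \times_H V$ covering the diffeomorphism $G' \bq H' \to G \bq H$.

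Finally, over a point $[\pi(g')] \in G \bq H$ the two fibers of $\Phi$ identify with the common vector space $V = W$ via $v \mapsto [g', v]$ and $v \mapsto [\pi(g'), v]$, and under these identifications $\Phi$ becomes the identity map. Hence $\Phi$ is a linear isomorphism on each fiber, giving the required bundle isomorphism. The only step requiring more than routine definition-chasing is the construction of $\rho$, which rests on the injectivity of $f$, itself a consequence of the freeness of the biquotient action.
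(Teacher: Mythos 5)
Your proposal is correct and follows essentially the same route as the paper: you take $W$ to be $V$ with the $H'$-action pulled back along $\rho=\psi|_{H'}$ (identified with a map to $H$), and show $\pi\times \mathrm{id}_V$ descends to the desired isomorphism. Your explicit justification that $f$ is injective (so $f(H)\cong H$) and your fiberwise-identification argument merely replace the paper's implicit identification and its direct injectivity/surjectivity check; the construction is identical.
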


\begin{proof}
Let $\rho:= \psi|_{H'}:H'\rightarrow H$ denote the projection.  Given a representation $V$ of $H$, we let $V'$ be the same vector space endowed with the $H'$-action given by precomposing the $H$-action with $\rho$.  That is, $h'\in H'$ acts via the action $\bullet'$ defined by the rule $h'\bullet' v = \rho(h')\bullet v$, where $\bullet$ denotes the $H$-action on $V$.


Then we claim that the map $\pi\times Id_V:G'\times V'\rightarrow G\times V$ descends to a bundle isomorphism $\phi:G'\times_{H'} V'\rightarrow G\times_H V$.  Note first that $\phi$ covers the diffeomorphism $ G'\bq H'\rightarrow G\bq H$ induced by $\pi:G'\rightarrow G$, so $\phi$ is a bundle map, if it is well defined.

We observe that $\phi$ is well defined since:
\begin{align*}\phi(h'(g',v)H')&= \phi((h'\star' g', h'\bullet' v)H')\\ &= ((\pi\times Id_V)(h'\star'g', h'\bullet' v))H \\\ &= (\rho(h')\star\pi(g'), \rho(h')\bullet v)H\\ &= \rho(h')(\pi(g'),v)H\\ &= (\pi(g'),v)H \\ &= \phi((g',v)H').\end{align*}
Because the composition $G'\times V'\rightarrow G\times V\rightarrow G\times_H V$ is a surjective submersion, the same is true of $\phi$.  Thus, we only need to show it is injective.

To that end, first note that if $\phi( (g_1^\prime, v_1) H') = \phi((g_2^\prime,v_2)H')$, then $(g_1^\prime,v_1)H'$ and $(g_2^\prime,v_2)H'$ must be in the same fiber above $G'\bq H'$.  In particular, there is $h'\in H'$ with $h'\star g_1^\prime = g_2^\prime$.  Applying $h'$ to $(g_1^\prime,v_1)$, we obtain a new representative $(g_2^\prime, w)H'$ of the orbit through $(g_1^\prime,v_1)$.

Now, $\phi((g_2^\prime, w)H') = \phi((g_2^\prime,v_2)H')$, so $(\pi(g_2^\prime), w)H = (\pi(g_2^\prime), v_2)H$.  Thus, there is $h\in H$ with $h(\pi(g_2^\prime), w) = (\pi(g_2^\prime), v_2)$.  In particular, $h\star \pi(g_2^\prime) = \pi(g_2^\prime)$.  Since the $H$-action on $G$ is free, $h = e$.  In particular, $w = v_2$, so $\phi$ is injective.
\end{proof}


It is natural to wonder about the converse operation - passing from biquotient vector bundles on $G'\bq H'$ to biquotient vector bundles on $G \bq H$.  The following proposition shows that this is not always possible.

\begin{proposition} For the two presentations $$S^2 \cong SO(3)/SO(2) \cong Spin(3)/Spin(2) = SU(2)/U(1),$$ we have a strict inclusion $\Bv(SO(3)/SO(2))\subsetneq \Bv(SU(2)/U(1))$.
\end{proposition}

\begin{proof}  Suppose $E\rightarrow S^2$ is the rank $2$ real vector bundle over $S^2$ with Euler class $\pm 1$.  We will show that $E\in \Bv(SU(2)/U(1))$ but $E\notin \Bv(SO(3)/SO(2))$. 

We first show that $E\in \Bv(SU(2)/U(1))$.  Let $V= \mathbb{R}^2$ with $U(1)$ acting as a unit speed rotation.  We claim that $SU(2)\times_{U(1)} V$ has Euler class $\pm 1$.  To see this, restrict $V$ to the unit sphere $S^1\subseteq V$.  The action of $U(1)$ on $S^1$ is simply transitive, so we find that $SU(2)\times_{U(1)} S^1\cong SU(2)$.  From the Gysin sequence applied to the bundle $S^1\rightarrow SU(2)\rightarrow S^2$, we see $e = \pm 1$.

Next, we show that $E\notin \Bv(SO(3)/SO(2))$.  The representations of $SO(2)$ on $V\cong \mathbb{R}^2$ are all given by rotation by some speed $k$.  If $k = 0$, we obviously get the trivial bundle, which has Euler class $0$.  Thus, we may assume $k\neq 0$.  Now, consider the unit sphere bundle $SO(3)\times_{SO(2)} S^1$.  Because $k\neq 0$, the $SO(2)$-action on $S^1$ is transitively with stabilizer given by the $k$-th roots of $1$.  But then, as shown by Kapovitch and Ziller \cite[Lemma 1.3]{KZ04}, we find that $SO(3)\times_{SO(2)}S^1\cong SO(3)/\mathbb{Z}_k$.  In particular, the unit sphere bundle cannot be simply connected.
\end{proof}

\begin{remark}  In fact, it is easy to see that every vector bundle over $S^2 = SU(2)/U(1)$ is a biquotient (homogeneous) vector bundle. In contrast, using the presentation $S^2 = SO(3)/SO(2)$, the corresponding biquotient bundles are precisely the vector bundles with vanishing second Stiefel-Whitney class (including $TS^2$).
\end{remark}

Recall that every connected compact Lie group $G$ has a cover of the form $G'\times T$ where $G'$ is simply connected and $T$ is a torus.  Moreover, a simply connected Lie group is isomorphic to a product of simply connected simple Lie groups.  As Proposition \ref{prop:cover} indicates, passing to a cover can only enlarge the class of biquotient vector bundles.  As such, we will always assume that $G$ has the form $G'\times T$.

Having determined how the set $\Bv(G\bq H)$ changes when passing to covers of $G$, we now investigate how the set $\Bv(G\bq H)$ changes when passing to a reduced presentation.  We recall that Totaro \cite[Lemma~3.3]{To02} has shown that every biquotient $M=G\bq H$ has a reduced presentation. We outline his argument because we will need some relevant notation below.  

Suppose $G = G_1\times G_2$.  Then for each $i\in \{1,2\}$ we have a projection map $p_i:(G\times G)/Z\rightarrow (G_i\times G_i)/Z_i$, where $Z_i = \{(g,g)\in G_i\times G_i: g\in Z(G_i)\}$. The composition $p_1\circ f:H\rightarrow (G_1\times G_1)/Z_1$ defines a biquotient action of $H$ on $G_1$.  Suppose this action of $H$ on $G_1$ is transitive and let $\hat{H}$ denote the isotropy subgroup at the identity $e\in G_1$.  Then using the other projection $p_2:(G\times G)/Z\rightarrow (G_2\times G_2)/ Z_2$, one obtains a biquotient action of $\hat{H}$ on $G_2$.  Totaro \cite[Lemma~3.3]{To02} shows that the $\hat{H}$-action on $G_2$ is free if the $H$-action on $G_1\times G_2 $ is free and that the natural inclusion $G_2\rightarrow \{e\}\times G_2\subseteq G$ induces a diffeomorphism from $G\bq H$ to $G_2\bq \hat{H}$.

The next proposition indicates that the reduced form of a biquotient gives the largest possible class of biquotient vector bundles.


\begin{proposition}\label{prop:reduced}  In the notation of the above discussion, suppose a biquotient $(G_1\times G_2)\bq H$ is not reduced, with $(G_1\times G_2)\bq H \cong G_2\bq \hat{H}$.  Then $$\Bv( (G_1\times G_2)\bq H)\subseteq \Bv( G_2\bq \hat{H}).$$

\end{proposition}

\begin{proof}Suppose $E\in \Bv((G_1\times G_2)\bq H)$ and write $E\cong (G_1\times G_2)\times_H V$ for an $H$-representation $V$.  Let $W$ denote $V$ with the action restricted to $\hat H$.   Then, as shown by Totaro \cite[Lemma~3.3]{To02}, the inclusion map $G_2\times W\rightarrow (G_1\times G_2)\times V$ induces a diffeomorphism $G_2\times_{\hat{H}} W \cong (G_1\times G_2)\times_{H} V.$  It remains to see that it respects the bundle projections.

{\color{black} But this is clear because the bundle projections are induced from the canonical projections $G_2\times W\rightarrow G_2$ and $(G_1\times G_2)\times V\rightarrow (G_1\times G_2)$, and the inclusion $G_2\times W\rightarrow (G_1\times G_2)\times V$ obviously respects these projection maps.} 
\end{proof}

It is natural to wonder about the converse of Proposition \ref{prop:reduced}. In Proposition~\ref{prop:strict_inclusion} below, we provide an example which illustrates that the inclusion $\Bv((G_1\times G_2)\bq H) \subseteq \Bv(G_2\bq \hat{H})$ can be strict.  To describe the example, we let $\iota:Sp(2)\rightarrow SU(4)$ with $\iota(A)=\iota(A_1+A_2j) = \begin{bmatrix} A_1 & A_2\\ -\overline{A}_2 & \overline{A}_1\end{bmatrix}$ be the inclusion obtained from the identification of $\mathbb{H}^2$ with $\mathbb{C}^4$.

We let $H:=Sp(2)\times SU(3)$ and $G = G_1\times G_2$ where $G_1 = SU(4)$ and $G_2 = Sp(3)\times SU(4)$.  Then $H$ acts on $G$ by the formula $$(A,B)\star (C,(D,E)) = (\iota(A)C \diag(B,1)^{-1}, (\diag(A,1) D, \diag(B,1) E)).$$

The projection of the $H$-action to $G_2$ is free, hence the $H$-action on $G$ is free.  Let $$\hat{H} = \left\{\left(\begin{bmatrix} a+bj & 0 \\ 0 &1\end{bmatrix}, \begin{bmatrix} a& b & 0 \\ -\overline{b} & \overline{a} & 0\\ 0 &0 & 1\end{bmatrix}\right) \in H: a,b\in \mathbb{C} \text{ and }|a|^2 + |b|^2 = 1\right\}$$ and observe that $\hat{H}\cong SU(2)$.

\begin{proposition}\label{prop:strict_inclusion}
Suppose $G$, $H$ and $\hat{H}$ are as above.  The biquotient $G\bq H$ admits a reduced presentation $G_2\bq \hat{H}$.   In addition, if $V$ is the standard representation of $SU(2)$, then the biquotient vector bundle $G_2\times_{\hat{H}} V\in \Bv(G_2\bq \hat{H})$ is not isomorphic to any biquotient vector bundle in $\Bv(G\bq H)$.
\end{proposition}

\begin{proof}  Consider the projection of the $H$-action to $G_1$.   Note that $\iota(Sp(2))\backslash SU(4) = Spin(5)\backslash Spin(6) \cong S^5$.  Under this diffeomorphism, the action of the other factor $SU(3)$ of $H$ on $S^5$ is the standard action, so it is transitive.  This implies that projection of the $H$-action to $G_1$ is transitive.  An easy computation reveals that $\hat{H}$ is the isotropy group at the identity in $G_1$, so we have a diffeomorphism $G\bq H\cong G_2\bq \hat{H}$.  Since this $\hat{H}$-action on $G_2$ is obviously not transitive on either factor of $G_2$, this is a reduced form.

\bigskip

We next claim that $E:=G_2\times_{\hat{H}} V$ is a non-trivial vector bundle.  To see this, note first that if it were trivial, then the unit sphere bundle $E^1$ would be diffeomorphic to $(G_2\bq \hat{H}) \times S^3$.  In particular, the homotopy group $\pi_6(E^1)$ would be non-trivial owing to the fact that  $\pi_6(S^3) \cong\mathbb{Z}/12\mathbb{Z}$ is non-trivial.  However, we will show that $\pi_6(E^1) = 0$.  

To see this, simply note that $E^1 \cong G_2\times_{\hat{H}} S^3$ where $\hat{H}\cong SU(2)$ acts on $S^3$ via the standard transitive action.  In particular, $G_2\times_{\hat{H}} S^3\cong G_2 = Sp(3)\times SU(4)$.  For both $Sp(3)$ and $SU(4)$, $\pi_6$ is in the stable range, so $\pi_6(Sp(3)) = \pi_6(SU(4)) = 0$ by Bott periodicity.  This completes the proof that $E$ is non-trivial.

\bigskip

Finally, we prove that $E$ is not isomorphic to a vector bundle in $\Bv(G\bq H)$ by showing that all rank $4$ vector bundles in $\Bv(G\bq H)$ are trivial.  Indeed, suppose $W$ is a $4$-dimensional real representation of $H$.  Then this is defined by a map $H\rightarrow O(4)$.  Both simple factors of $H$ have dimension larger than $6 = \dim O(4)$, so the map must be trivial.  This implies that $G\times_{H} W$ is trivial.
\end{proof}

\begin{remark}There are infinitely many other possibilities for $G_2 = Sp(3)\times SU(4)$  in the proof above.  All that is needed is that $H$ acts freely on $G_2$, and $\pi_6(G_2)$ does not contain an isomorphic copy of $\pi_6(S^3)\cong \mathbb{Z}/12\mathbb{Z}$.

\end{remark}


\subsection{Alternative biquotients related to $G\bq H$} 

In the literature, one can find a slightly different definition of biquotient where a homomorphism $H'\rightarrow G\times G$ is used instead of a homomorphism to $(G\times G)/Z$. In this other setting, one is allowed more generally to consider effectively free actions, say with ineffective kernel $K \subseteq H'$.  

It is easy to translate from one definition to the other, at least when $Z$ is finite.  Specifically, if \textcolor{black}{$f: H\rightarrow  (G\times G)/Z$} defines a free biquotient action, then we may lift $f$ to a map $f':H'\rightarrow G\times G$ for some cover $H'$ of $H$.  It is easy to see that $f'$ defines an effectively free action of $H'$ on $G$ and that the orbit spaces $G\bq H$ and $G\bq H'$ are diffeomorphic.  Moreover, if $K\subseteq H'$ is the kernel of the $H'$-action on $G$, then $H'/K\cong H$.
  
Conversely, given $f':H'\rightarrow G\times G$ defining an effectively free action of $H'$ on $G$, let $K$ denote the kernel of the composition $H'\xrightarrow{f'} G\times G\rightarrow (G\times G)/Z$.  Then the composition defines a free biquotient action of $H:=H'/K$ on $G$ and the two orbit spaces $G\bq H'$ and $G\bq H$ are diffeomorphic.

Using the alternative definition of biquotient, one must exercise caution when forming vector bundles in the form $G\times_{H'} V$. Specifically, given a representation $V$ of $H'$, the space $G\times_{H'} V$ need not be a smooth manifold owing to the fact that the isotropy group at a point in the zero-section contains $K$, while the isotropy group at a non-zero vector is in general smaller.  Because of this, when using the alternative definition, one must restrict to representations of $H'$ for which $K$ acts trivially. 

In other words, in this alternative definition we do not, in general, get an $H'$-principal bundle $H'\to G\to G\bq H'$.  Rather, we obtain an $H'/K$-principal bundle $H'/K\to G\to G\bq H'$; thus the construction $G\times_{H'} V$ is only guaranteed to create a vector bundle over $G\bq H'$ if $V$ is a representation of $H'/K$.

{\color{black}  In the case of non-effective $H'$-actions on $G$ with kernel $K$, it is natural to wonder how the set of isomorphism classes of bundles of the form $G\times_{H'} V$ (with $V$ a representation of $H'/K$) compares with $\Bv(G\bq H)$.  The next proposition shows that these two sets coincide when $Z(G)$ is finite.}

\begin{proposition}  Suppose $Z=\Delta Z(G)\subseteq G\times G$ is finite.  Suppose $G\bq H$ is a biquotient and that $G\bq H'$ is an alternative biquotient related to $G\bq H$ as above.  Then the set $\Bv(G\bq H)$ coincides with the set of isomorphism classes of vector bundles of the form $G\times_{H'} V'$, where $V'$ is an $H'$-representation for which $K$ acts trivially.

\end{proposition}

\begin{proof} 

Suppose $V'$ is an $H'$-representation and that $K$ acts trivially. Let $V$ denote $V'$ with action by $H = H'/K$.  Then $K$ acts trivially on $G\times V'$, so $G\times_{H'} V' = G\times_{H'/K} V'\cong G\times_H V$.

Conversely, given an $H$-representation $V$, let $V'$ denote $V$ with $H'$ acting via the projection from $H'$ to $H$.
\end{proof}

{\color{black}  For the duration of the paper, the word ``biquotient'' will only refer to the definition given in Definition \ref{def:biq}.}




\subsection{The stable class of the tangent bundle}

Let $G\bq H$ be a closed biquotient defined by a homomorphism $f$. As shown by Eschenburg \cite[Section 36]{Es84}, $G\bq H$ can be given the presentation $G\backslash(G\times G)/H$, where $G$ (resp. $H$) is embedded in $G\times G$ diagonally (resp. via the homomorphism $f$). 

Let $\alpha_H$ denote the biquotient vector bundle (with respect to the presentation $G\bq H$) $G\times_H \mathfrak{h}$ with $\mathfrak{h}$ denoting the Lie algebra of $H$, with $H$ acting via the adjoint action.  Let $\alpha_G$ denote the vector bundle $((G\times G)/H) \times_G \mathfrak{g}$, defined similarly. Building upon work of Singhof \cite{Si93}, Kerin shows in \cite[Lemma~6.2]{Ke11} that the tangent bundle $T(G\bq H)$ of a biquotient has the property that 
$$T(G\bq H)\oplus \alpha_H\cong \alpha_G.$$
Rewriting $\alpha_G$ as $(G\times G)\times_{G\times H} \mathfrak{g}$, where the $H$ factor of $G\times H$ acts trivially on $\mathfrak{g}$, we see that it can be regarded as a biquotient vector bundle with respect to the presentation $G\backslash(G\times G)/H$. By Proposition~\ref{prop:reduced}, the bundle $\alpha_G$ is isomorphic to a biquotient vector bundle of the form $G\times_H \mathfrak{g}$ for some action of $H$ on $\mathfrak{g}$. 

{\color{black} With this background, we are ready to prove Theorem \ref{thm:sufficient_conditions_stably_biquotient}, which we restate for the readers' convenience.

\begin{theorem}
Let $M = G\bq H$ be a closed biquotient. If $\oplus_{i>0} H^{4i}(M,\QQ)=0$, then the tangent bundle $TM$ is stably isomorphic to a biquotient vector bundle $G\times_H V$ for some $H$-representation $V$. 
\end{theorem}}

\begin{proof}
It follows from the discussion above that $T(G\bq H)$ is stably isomorphic to $\alpha_G \oplus E$, where $E$ denotes any inverse of $\alpha_H$ (i.e.~any vector bundle such that the sum $E\oplus \alpha_H$ is isomorphic to a trivial vector bundle). From \cite[Theorem~1.3]{DG21} we know that the topological assumption implies that there an inverse $E_0$ which is isomorphic to a biquotient vector bundle $G\times_H V$. Since the map taking $H$-representations to biquotient vector bundles over $G\bq H$ is closed under taking Whitney sums, it follows that $\alpha_G \oplus E_0$ is a biquotient vector bundle.
\end{proof}

\begin{remark}
Suppose $G\bq H = G/H$ is homogeneous. When using Proposition~\ref{prop:reduced} to rewrite $\alpha_G=((G\times G)/H) \times_G \mathfrak{g}=(G\times (G/H))\times_G \mathfrak{g}$ as a bundle of the form $G\times_H \mathfrak{g}$, it turns out that the action of $H$ on $\mathfrak{g}$ is trivial. Indeed, $\alpha_G$ is in this case a trivial vector bundle. This should not be any surprise, as $G\times_H \mathfrak{h}$ is well known to be an inverse for the tangent bundle $T(G/H)$ (which equals $G\times_H \mathfrak{p}$, where $\mathfrak{p}$ denotes a complement of the Lie algebra of $H$ in $G$ endowed with the isotropy action of $H$, see e.g. Michor's book \cite[Section~18.16]{Michor}).
\end{remark}


\section{\texorpdfstring{The Hirzebruch $\chi_y$ genus and Theorem 1.3}{The Hirzebruch Xy genus and Theorem 1.3}}\label{appendix}

The purpose of this section is to introduce the Hirzebruch $\chi_y$ genus in order to provide a proof of Theorem \ref{thm:Hirzebruch}, a result which appears in Hirzebruch's collected works \cite[page 777]{HirCW}. We begin with a brief overview of spin$^c$ structures, see {\color{black} Lawson and Michelsohn's book} \cite[Appendix D]{LM} for more details.

Recall that $Spin^c(n) = (Spin(n)\times U(1))/\langle(-1,-1)\rangle$, so there is a natural projection map $\pi : Spin^c(n) \to Spin(n)/\langle -1\rangle\times U(1)/\langle -1\rangle \cong SO(n)\times U(1)$. Let $P_{SO(n)}$ be an $SO(n)$-principal bundle over a topological space $B$. A \textit{spin$^c$ structure} on $P_{SO(n)}$ consists of a $U(1)$-principal bundle $P_{U(1)}$ and a $Spin^c(n)$-principal bundle $P_{Spin^c(n)}$ with a bundle map $P_{Spin^c(n)} \to P_{SO(n)}\times_B P_{U(1)}$ which is $Spin^c(n)$-equivariant where the action of $Spin^c(n)$ on $P_{SO(n)}\times_B P_{U(1)}$ is given by $g\cdot p := \pi(g)\cdot p$. We call the complex line bundle associated to $P_{U(1)}$ the \textit{canonical line bundle} of the spin$^c$ structure. 

\begin{example}\label{Jspin^c}
The oriented orthonormal frame bundle of an $n$-dimensional Hermitian vector bundle (with respect to the induced orientation and Riemannian metric) admits a natural spin$^c$ structure because the map $U(n) \to SO(2n)\times U(1)$, $A \mapsto (A, \det(A))$ lifts through $\pi$, and the canonical line bundle is the determinant line bundle.

There is another natural spin$^c$ structure in this situation. The map $U(n) \to SO(2n)\times U(1)$, $A \mapsto (A, \det(A)^{-1})$ also lifts through $\pi$, and the canonical line bundle of the corresponding spin$^c$ structure is the dual of the determinant line bundle. In particular, for the tangent bundle of a Hermitian manifold $X$, the canonical line bundle of this spin$^c$ structure is $K_X$, the canonical bundle. While this spin$^c$ structure would provide more consistency of the term ‘canonical’, the spin$^c$ structure mentioned initially is more prevalent in the literature.
\end{example} 

Given a spin$^c$ structure on $P_{SO(n)}$, we can form the associated complex spin$^c$ vector bundle $\mathbb{S} = P_{Spin^c(n)}\times_{\Delta}V$ where $\Delta : Spin^c(n) \to GL(V)$ is the restriction of a non-trivial irreducible complex representation of the complexified Clifford algebra. For every $n$, the representation $\Delta$ is unique, so the bundle $\mathbb{S}$ is well-defined. For $n$ even, the representation $\Delta$ is reducible, and we obtain a splitting $\mathbb{S} = \mathbb{S}^+\oplus\mathbb{S}^-$ with $\rank_{\mathbb{C}}\mathbb{S}^+ = \rank_{\mathbb{C}}\mathbb{S}^-$.

\begin{example}\label{Jspinbundles}
Equip the oriented orthonormal frame bundle of a Hermitian vector bundle $V$ with the spin$^c$ structure from Example \ref{Jspin^c}. Then $\mathbb{S} = \bigwedge^*V$ which decomposes into $\mathbb{S}^+ = \bigwedge^{\text{even}}V$ and $\mathbb{S}^- = \bigwedge^{\text{odd}}V$; see {\color{black} Morgan's book} \cite[Cor 3.4.5]{M} {\color{black} for more details}.
\end{example}

A spin$^c$ structure on an oriented Riemannian manifold $(M, g)$ is a spin$^c$ structure on the oriented orthonormal frame bundle -- such a structure exists if and only if the third integral Stiefel-Whitney class $W_3(M)$ vanishes. Suppose now that a spin$^c$ structure has been fixed. Using the fact that $\mathbb{S}$ is not only a complex vector bundle, but a bundle of Clifford modules, one can construct a self-adjoint elliptic differential operator $D: \Gamma(\mathbb{S}) \to \Gamma(\mathbb{S})$ called the spin$^c$ Dirac operator. When $n$ is even, we have $D(\Gamma(\mathbb{S}^{\pm})) \subseteq \Gamma(\mathbb{S}^{\mp})$ so there is an induced elliptic operator $D|_{\Gamma(\mathbb{S}^+)} : \Gamma(\mathbb{S}^+) \to \Gamma(\mathbb{S}^-)$ which we denote $\slashed{\partial}^c$. The index of $\slashed{\partial}^c$ is given by
$$\operatorname{ind}(\slashed{\partial}^c) = \int_M\exp(c_1(L)/2)\hat{A}(TM),$$ 
where $L$ is the canonical line bundle of the spin$^c$ structure. Note that for the spin$^c$ Dirac operator $D$ we have $\operatorname{ind}(D) = 0$ as $D$ is self-adjoint.

If $E \to M$ is a Hermitian vector bundle, then there is a twisted spin$^c$ Dirac operator $D_E$ which gives rise to the operator $\slashed{\partial}^c_E : \Gamma(\mathbb{S}^+\otimes E) \to \Gamma(\mathbb{S}^-\otimes E)$. The index of $\slashed{\partial}^c_E$ is 
$$\operatorname{ind}(\slashed{\partial}_E^c) = \int_M\exp(c_1(L)/2)\operatorname{ch}(E)\hat{A}(TM),$$
{\color{black} as shown by Schwarzenberger in an appendix to a book of Hirzebruch \cite[Theorem 26.1.1]{Hir}}. 

Suppose now that $\dim M = 2n$ and $M$ admits almost complex structures. We fix an almost complex structure $J$ such that $g$ is Hermitian. By Example \ref{Jspin^c}, there is a spin${}^c$ structure on $(M, g)$ which has associated line bundle $L = \det_{\mathbb{C}}(TM)$. As $c_1(L) = c_1(M)$ and $\exp(c_1(M)/2)\hat{A}(TM) = \operatorname{Td}(TM)$, the index of the operator $\slashed{\partial}^c_E$ is given by
\[\operatorname{ind}(\slashed{\partial}_E^c) = \int_M\operatorname{ch}(E)\operatorname{Td}(TM).\tag{$\ast$}\]
Since $T^{1,0}M \cong (T^{0,1}M)^*$, we have $\mathbb{S}^+ \cong \bigwedge^{0,\text{even}}M$ and $\mathbb{S}^- \cong \bigwedge^{0,\text{odd}}M$ by Example \ref{Jspinbundles}. The Hermitian metric $g$ induces a Hermitian metric on $\bigwedge^{p,0}M$, so we can construct the operator $\slashed{\partial}_{\BigWedge^{p,0}M}^c$. Under the appropriate identifications, we can view $\slashed{\partial}_{\BigWedge^{p,0}M}^c$ as an operator $\Gamma(\bigwedge^{p,\text{even}}M) \to \Gamma(\bigwedge^{p,\text{odd}}M)$; for notational convenience, we will denote this operator by $\slashed{\partial}^c_p$. Setting $\chi^p(M) := \operatorname{ind}(\slashed{\partial}_p^c)$, the \textit{Hirzebruch $\chi_y$ genus} is defined to be
$$\chi_y(M) := \sum_{p=0}^n\chi^p(M)y^p.$$

Note that $\chi_0(M) = \chi^0(M)$ is precisely the Todd genus of $M$ by $(\ast)$.

Suppose now that $J$ is integrable, in which case $n = \dim_{\mathbb{C}}M$. Then, modulo order zero terms, {\color{black} Gauduchon \cite[Proposition 8]{Gau97} has shown that }we have $\slashed{\partial}^c = \sqrt{2}(\bar{\partial} + \bar{\partial}^*)$ . In addition, if $E$ is holomorphic, then modulo order zero terms $\slashed{\partial}^c_E = \sqrt{2}(\bar{\partial}_E + \bar{\partial}_E^*)$ and $(\ast)$ reduces to the Hirzebruch-Riemann-Roch theorem. In particular, as $\bigwedge^{p,0}M$ is holomorphic and $\slashed{\partial}^c_p : \Gamma(\bigwedge^{p, \text{even}}M) \to \Gamma(\bigwedge^{p, \text{odd}}M)$ is just $\sqrt{2}(\bar{\partial} + \bar{\partial}^*)$ to highest order, we have 
$$\chi^p(M) = \operatorname{ind}(\slashed{\partial}^c_p) = \operatorname{ind}\sqrt{2}(\bar{\partial} + \bar{\partial}^*) = \sum_{q=0}^n(-1)^qh^{p,q}(M) = \chi(M, \Omega^p),$$ 
where $h^{p,q}(M)$ denotes the $(p, q)$-th Hodge number of $M$, and $\chi(M, \Omega^p)$ denotes the holomorphic Euler characteristic of $\Omega^p$.

Returning to the general case, we will need three properties of the $\chi_y$ genus: 

{\bf Property 1.} $\chi_{-1}(M) = \chi(M)$.

{\bf Property 2.} $\chi_y(M) = (-y)^n\chi_{y^{-1}}(M)$, equivalently $\chi^p(M) = (-1)^n\chi^{n-p}(M)$.

{\bf Property 3.} $\chi_1(M) = \sigma(M)$.

In order to establish these, we need the following lemma:

\begin{lemma}
Let $x_i$ be the Chern roots of $TM$. Then
$$\chi_y(M) = \int_M\prod_{i=1}^n\frac{x_i(1+ye^{-x_i})}{1-e^{-x_i}}.$$
\end{lemma}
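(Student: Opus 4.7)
The plan is to apply formula $(\ast)$ with $E = \bigwedge^{p,0}M$, sum over $p$ weighted by $y^p$, and then evaluate the resulting characteristic class via the splitting principle. By definition $\chi^p(M) = \operatorname{ind}(\slashed{\partial}^c_p)$ and $(\ast)$ with $E = \bigwedge^{p,0}M$ gives
\[
\chi^p(M) = \int_M \operatorname{ch}\!\bigl(\tfrac{}{}{\BigWedge}^{p,0}M\bigr)\operatorname{Td}(TM).
\]
Multiplying by $y^p$ and summing from $p=0$ to $n$, and using that the Chern character is additive on direct sums, I obtain
\[
\chi_y(M) = \int_M \Bigl(\sum_{p=0}^n y^p \operatorname{ch}(\tfrac{}{}{\BigWedge}^{p,0}M)\Bigr)\operatorname{Td}(TM).
\]

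Next I invoke the splitting principle. Let $x_1,\dots,x_n$ be the Chern roots of $TM$ (as the complex bundle $T^{1,0}M$). Then the Chern roots of $(T^{1,0}M)^*\cong T^{*1,0}M$ are $-x_1,\dots,-x_n$, and the standard formula for the Chern character of an exterior power gives
\[
\operatorname{ch}(\tfrac{}{}{\BigWedge}^{p,0}M) = \sum_{\substack{I\subseteq\{1,\dots,n\}\\ |I|=p}} \exp\Bigl(-\sum_{i\in I} x_i\Bigr).
\]
Summing the weighted series by the elementary identity $\sum_{p=0}^n y^p e_p(t_1,\dots,t_n)=\prod_{i=1}^n(1+yt_i)$ applied to $t_i=e^{-x_i}$ yields
\[
\sum_{p=0}^n y^p \operatorname{ch}(\tfrac{}{}{\BigWedge}^{p,0}M) = \prod_{i=1}^n (1+y e^{-x_i}).
\]

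Finally I substitute the standard expression $\operatorname{Td}(TM)=\prod_{i=1}^n \dfrac{x_i}{1-e^{-x_i}}$ and combine factor-by-factor to obtain
\[
\chi_y(M) = \int_M \prod_{i=1}^n \frac{x_i(1+ye^{-x_i})}{1-e^{-x_i}},
\]
which is the claimed formula. The only delicate point is keeping track of signs when passing between $T^{1,0}M$ and its dual, ensuring that the Chern roots used for $\bigwedge^{p,0}M$ are the $-x_i$; everything else is a direct application of the index formula $(\ast)$, the splitting principle, and the generating-function identity for elementary symmetric polynomials.
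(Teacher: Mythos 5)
Your proof is correct and follows essentially the same route as the paper's: apply the index formula $(\ast)$ with $E = \bigwedge^{p,0}M$, identify $\operatorname{ch}(\bigwedge^{p,0}M)$ with the elementary symmetric polynomials in $e^{-x_i}$ via the splitting principle, and sum against $y^p$ using the generating-function identity together with $\operatorname{Td}(TM)=\prod_i x_i/(1-e^{-x_i})$. No gaps to report.
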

\begin{proof}
Without loss of generality, we can suppose that $TM$ is isomorphic to a direct sum of complex line bundles $\ell_1\oplus\dots\oplus\ell_n$ by the splitting principle, and hence $T^*M\cong\ell_1^*\oplus\dots\oplus\ell_n^*$. Defining $x_i = c_1(\ell_i)$, we have $-x_i = c_1(\ell_i^*)$. Note that 
$$\bigwedge\nolimits^{p,0}M = \bigwedge\nolimits^pT^*M = \bigwedge\nolimits^p(\ell_1^*\oplus\dots\oplus\ell_n^*) = s_p(\ell_1^*, \dots, \ell_n^*),$$ 
where $s_p$ is the $p$-th elementary symmetric polynomial (addition and multiplication correspond to direct sum and tensor product respectively). Therefore
$$\operatorname{ch}\left(\bigwedge\nolimits^{p,0}M\right) = \operatorname{ch}(s_p(\ell_1^*, \dots, \ell_n^*)) = s_p(\operatorname{ch}(\ell_1^*), \dots, \operatorname{ch}(\ell_n^*)) = s_p(e^{-x_1}, \dots, e^{-x_n}).$$

So we have
\begin{align*}
\chi_y(M) &= \sum_{p=0}^n\chi^p(M)y^p\\ 
&= \sum_{p=0}^n\operatorname{ind}(\slashed{\partial}^c_p)y^p\\ 
&= \sum_{p=0}^n\left(\int_M\operatorname{ch}\left(\bigwedge\nolimits^{p,0}M\right)\operatorname{Td}(M)\right)y^p\\
&= \int_M\left(\sum_{p=0}^n\operatorname{ch}\left(\bigwedge\nolimits^{p,0}M\right)y^p\right)\operatorname{Td}(M)\\
&= \int_M\left(\sum_{p=0}^n s_p(e^{-x_1}, \dots, e^{-x_n})y^p\right)\prod_{i=1}^n\frac{x_i}{1-e^{-x_i}}\\
&= \int_M\prod_{i=1}^n(1 + e^{-x_i}y)\prod_{i=1}^n\frac{x_i}{1-e^{-x_i}}\\
&= \int_M\prod_{i=1}^n\frac{x_i(1 + e^{-x_i}y)}{1-e^{-x_i}}.
\end{align*}
\end{proof}

We can now demonstrate the first property.

{\bf Property 1.} $\chi_{-1}(M) = \chi(M)$.
\begin{proof}
Setting $y = -1$, we have
$$\chi_{-1}(M) = \int_M\prod_{i=1}^n\frac{x_i(1 - e^{-x_i})}{1-e^{-x_i}} = \int_M\prod_{i=1}^n x_i = \int_M c_n(M) = \int_M e(M) = \chi(M).$$
\end{proof}

When $J$ is integrable, combining Property 1 with the identity $\chi^p(M) = \sum\limits_{q=0}^n(-1)^qh^{p,q}(M)$ we obtain the following result (which also follows from the existence of the Fr\"olicher spectral sequence):

\begin{corollary}
Let $M$ be an $n$-dimensional compact complex manifold. The Euler characteristic of $M$ is given by
$$\chi(M) = \sum_{p=0}^n\sum_{q=0}^n(-1)^{p+q}h^{p,q}(M).$$
\end{corollary}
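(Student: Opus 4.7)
The plan is to chain together two ingredients the author has just placed at our disposal: Property 1, asserting $\chi_{-1}(M)=\chi(M)$, and the identification $\chi^p(M)=\sum_{q=0}^n(-1)^q h^{p,q}(M)$ valid whenever the almost complex structure $J$ is integrable. Since $M$ here is a compact complex manifold, $J$ is integrable and this identification applies on the nose to every $p$.

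Concretely, I would start from the definition $\chi_y(M)=\sum_{p=0}^n \chi^p(M)y^p$, specialize to $y=-1$, and invoke Property 1 to rewrite the left-hand side as $\chi(M)$. Then I would substitute the Hodge-theoretic formula for $\chi^p(M)$ into each summand, giving
\[
\chi(M)=\sum_{p=0}^n(-1)^p\chi^p(M)=\sum_{p=0}^n(-1)^p\sum_{q=0}^n(-1)^q h^{p,q}(M),
\]
and collapse the signs to $(-1)^{p+q}$. That is essentially the entire argument: it is a one-line substitution once both ingredients are in hand.

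There is really no obstacle here; the work was all done in establishing Property 1 (via the Chern-root computation that sets $y=-1$ and collapses the integrand to $c_n(M)=e(M)$) and in identifying $\chi^p$ as the holomorphic Euler characteristic of $\Omega^p$ through the principal-symbol comparison $\slashed{\partial}^c_p=\sqrt{2}(\bar\partial+\bar\partial^*)$ on $\bigwedge^{p,0}M$. The only thing one might pause over is the legitimacy of applying $\chi^p(M)=\sum_q(-1)^q h^{p,q}(M)$ for every $p$; this is fine because $\bigwedge^{p,0}M$ is a holomorphic vector bundle for each $p$ on an integrable $(M,J)$, and the index of $\slashed{\partial}^c$ twisted by a holomorphic bundle agrees with the alternating sum of its Dolbeault cohomology dimensions by Hirzebruch–Riemann–Roch, exactly as noted immediately before Property 1.

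As a sanity check on the formula, the inner sum $\sum_q(-1)^q h^{p,q}(M)$ is $\chi(M,\Omega^p)$, so the corollary also reads $\chi(M)=\sum_p(-1)^p\chi(M,\Omega^p)$, which matches the form obtained from degenerating the Frölicher spectral sequence at $E_1$ (or, without degeneration, from taking Euler characteristics of the $E_1$-page, since the Euler characteristic of a bounded spectral sequence is preserved from page to page). This parenthetical remark, which the author explicitly alludes to, provides an independent corroboration of the identity and could be mentioned for the reader's benefit, but is not needed for the proof.
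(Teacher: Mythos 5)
Your argument is correct and is exactly the paper's: combine Property 1, $\chi_{-1}(M)=\chi(M)$, with the integrable-case identity $\chi^p(M)=\sum_{q=0}^n(-1)^q h^{p,q}(M)$ established via $\slashed{\partial}^c_p=\sqrt{2}(\bar{\partial}+\bar{\partial}^*)$ and Hirzebruch--Riemann--Roch, then substitute into $\chi_{-1}(M)=\sum_{p=0}^n(-1)^p\chi^p(M)$. Even your closing remark about the Fr\"olicher spectral sequence mirrors the paper's parenthetical comment, so nothing further is needed.
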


For the remaining properties, we also need the following lemma which the first author learnt from Ping Li:

\begin{lemma}\label{tlemma}
Let $t$ be a parameter. Then 
$$\chi_y(M) = \int_M\prod_{i=1}^n\frac{x_i(1 + ye^{-tx_i})}{1 - e^{-tx_i}}.$$
\end{lemma}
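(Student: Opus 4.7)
The plan is to reduce the $t$-dependent identity to the $t=1$ case (the previous lemma) via a simple rescaling argument. The parameter $t$ can be absorbed by a change of variables $x_i \mapsto t x_i$, and only the top-cohomological-degree part of the integrand survives under $\int_M$; this is precisely what kills the $t$-dependence.

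First, set
\[
F_t(x_1,\dots,x_n) := \prod_{i=1}^n \frac{x_i(1+ye^{-tx_i})}{1-e^{-tx_i}}.
\]
A direct inspection of the formula shows that $F_1(tx_1,\dots,tx_n) = t^n\, F_t(x_1,\dots,x_n)$ (the factor $t^n$ comes from the $x_i$ appearing in the numerators), so
\[
F_t(x_1,\dots,x_n) = t^{-n}\, F_1(tx_1,\dots,tx_n),
\]
where $F_1$ is exactly the integrand from the previous lemma.

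Next, by the splitting principle we may treat $x_1,\dots,x_n$ as formal variables and expand $F_1$ as a power series in them, writing $F_1 = \sum_{k\ge 0} P_k$ with each $P_k(x_1,\dots,x_n)$ a symmetric polynomial homogeneous of total degree $k$. The substitution $x_i\mapsto t x_i$ scales each such monomial by $t^k$, hence
\[
F_t(x_1,\dots,x_n) = t^{-n}\sum_{k\ge 0} t^k P_k(x_1,\dots,x_n) = \sum_{k\ge 0} t^{\,k-n}\, P_k(x_1,\dots,x_n).
\]

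Finally, each $x_i$ has cohomological degree $2$, so $P_k$ represents a class in $H^{2k}(M;\mathbb{Q})$, and the pairing with the fundamental class $\int_M$ selects only the piece of top degree, namely $k=n$. Therefore
\[
\int_M F_t \;=\; t^{\,n-n}\int_M P_n \;=\; \int_M P_n \;=\; \int_M F_1 \;=\; \chi_y(M),
\]
by the previous lemma, which is precisely the claim. I do not anticipate any serious obstacle: the only subtlety is checking that the formal power-series manipulation is legitimate, but this is no different from the setup already used in the proof of the previous lemma, since all but finitely many graded pieces vanish after applying $\int_M$.
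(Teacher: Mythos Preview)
Your proof is correct and follows essentially the same approach as the paper's: both exploit the scaling relation $F_1(tx_1,\dots,tx_n)=t^nF_t(x_1,\dots,x_n)$ together with the fact that $\int_M$ only sees the degree-$2n$ part of the integrand. The paper's version is slightly terser, but the underlying argument is identical.
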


\begin{proof}
The key is to note that $\chi_y(M)$ only depends on the degree $2n$ part of the integrand. As $\deg x_i = 2$, if we replace $x_i$ by $tx_i$, then we have
$$\int_M\prod_{i=1}^n\frac{tx_i(1 + ye^{-tx_i})}{1 - e^{-tx_i}} = t^n\int_M\prod_{i=1}^n \frac{x_i(1 + ye^{-x_i})}{1 - e^{-x_i}} = t^n\chi_y(M).$$
Dividing through by $t^n$, we arrive at the result.
\end{proof}

{\bf Property 2.} $\chi_y(M) = (-y)^n\chi_{y^{-1}}(M)$, equivalently $\chi^p(M) = (-1)^n\chi^{n-p}(M)$.
\begin{proof}
Using $t = -1$ in the expression for $\chi_y(M)$ from Lemma \ref{tlemma}, we have

\begin{align*}
\chi_y(M) &= \int_M\prod_{i=1}^n\frac{x_i(1+ye^{x_i})}{1-e^{x_i}}\\
&= \int_M\prod_{i=1}^n(-y)\frac{x_i(y^{-1} + e^{x_i})}{e^{x_i}-1}\\
&= (-y)^n\int_M\prod_{i=1}^n\frac{x_i(e^{x_i}+y^{-1})}{e^{x_i}-1}\\
&= (-y)^n\int_M\prod_{i=1}^n\frac{x_i(1+y^{-1}e^{-x_i})}{1-e^{-x_i}}\\
&= (-y)^n\chi_{y^{-1}}(M).
\end{align*}

Comparing coefficients of $y^p$ gives the equivalent statement.
\end{proof}

In the integrable case, the equality $\chi^p(M) = (-1)^n\chi^{n-p}(M)$ follows from Serre duality.

{\bf Property 3.} $\chi_1(M) = \sigma(M)$.
\begin{proof}
If $n$ is odd, then $\chi_1(M) = 0$ by Property 2, and $\sigma(M) = 0$ by definition. 

If $n$ is even, let $n = 2m$. Setting $y = 1$ gives
$$\chi_1(M) = \int_M\prod_{i=1}^n \frac{x_i(1 + e^{-x_i})}{1-e^{-x_i}} = \int_M\prod_{i=1}^n \frac{x_i(1 + e^{-2x_i})}{1-e^{-2x_i}}= \int_M\prod_{i=1}^n\frac{x_i}{\tanh(x_i)},$$
where the second equality uses Lemma \ref{tlemma} with $t = 2$. Recall that the power series which generates the $L$ genus in terms of Pontryagin classes is
$$Q(x) = \frac{\sqrt{x}}{\tanh(\sqrt{x})}.$$
By \cite[Lemma 1.3.1]{Hir}, the corresponding power series which generates the $L$ genus in terms of Chern classes is
$$\widetilde{Q}(x) = \frac{x}{\tanh(x)}.$$
So the above computation shows that 
$$\chi_1(M) = \int_M\prod_{i=1}^n\frac{x_i}{\tanh(x_i)} = \int_M L_m(p_1, \dots, p_m) = \sigma(M).$$
\end{proof}

In the integrable case, we immediately obtain the following corollary.

\begin{corollary}
Let $M$ be an $n$-dimensional compact complex manifold. The signature of $M$ is given by
$$\sigma(M) = \sum_{p=0}^n\sum_{q=0}^n(-1)^qh^{p,q}(M).$$
\end{corollary}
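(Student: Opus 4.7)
The plan is to derive this immediately from Property 3 together with the Dolbeault-theoretic interpretation of $\chi^p(M)$ that was established earlier in the appendix for the integrable case. Since $M$ is an honest compact complex manifold, the almost complex structure $J$ is integrable, so (as noted just after the proof of Property 1) each summand satisfies
\[
\chi^p(M) \;=\; \chi(M,\Omega^p) \;=\; \sum_{q=0}^{n}(-1)^q h^{p,q}(M).
\]
This identification uses the fact that, to highest order, $\slashed{\partial}^c_p = \sqrt{2}(\bar\partial + \bar\partial^*)$ on $\bigwedge^{p,*}M$, so the index of $\slashed{\partial}^c_p$ computes the alternating sum of Dolbeault cohomology dimensions.

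Next I would set $y = 1$ in the definition $\chi_y(M) = \sum_{p=0}^{n}\chi^p(M)\,y^p$ to obtain $\chi_1(M) = \sum_{p=0}^{n}\chi^p(M)$. Combining this with Property 3, which gives $\chi_1(M) = \sigma(M)$, and substituting the Dolbeault expression for $\chi^p(M)$, I arrive at
\[
\sigma(M) \;=\; \chi_1(M) \;=\; \sum_{p=0}^{n}\chi^p(M) \;=\; \sum_{p=0}^{n}\sum_{q=0}^{n}(-1)^{q}h^{p,q}(M),
\]
which is the desired formula.

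There is no real obstacle here; everything needed has already been assembled. The only mildly subtle point is to remember that the identification $\chi^p(M) = \sum_q (-1)^q h^{p,q}(M)$ requires integrability (so that $\slashed{\partial}^c$ agrees with $\sqrt{2}(\bar\partial + \bar\partial^*)$ up to a zeroth-order term), which is exactly the hypothesis of the corollary. Thus the statement is really just the specialization of Property 3 to the holomorphic setting, dual in spirit to the earlier corollary computing $\chi(M)$ as the double alternating sum $\sum_{p,q}(-1)^{p+q}h^{p,q}(M)$ that came from Property 1.
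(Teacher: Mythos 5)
Your proof is correct and matches the paper's (implicit) argument: the paper states this corollary immediately after Property 3 with the remark ``In the integrable case, we immediately obtain the following corollary,'' relying exactly on the identification $\chi^p(M) = \sum_{q=0}^n (-1)^q h^{p,q}(M)$ established earlier and the evaluation $\chi_1(M) = \sum_p \chi^p(M) = \sigma(M)$.
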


\begin{remark}If $M$ admits a K\"ahler metric (or more generally, has the $\partial\bar{\partial}$-property), then $h^{p,q}(M) = h^{q,p}(M)$. It follows that we can replace $(-1)^q$ by $(-1)^p$ in the above summation, in which case the identity follows from the Hard Lefschetz theorem \cite[Corollary 3.3.18]{Huy}. In general, the modified summation does not compute the signature as can easily by checked for a Hopf surface.

\end{remark}

Now that we have all three properties of $\chi_y(M)$ at our disposal, we can give a proof of Theorem \ref{thm:Hirzebruch}{\color{black}, which we recall for the readers' convenience.}

\begin{theorem}
Suppose $M$ is a closed $4m$-dimensional manifold which admits an almost complex structure, then $\chi(M) \equiv (-1)^m\sigma(M) \bmod 4$.
\end{theorem}
\begin{proof}
We separate the proof into two cases based on the parity of $m$.

If $m = 2k$ is even, then $n = 2m = 4k$ so
\begin{align*}
\chi(M) &= \chi_{-1}(M)\\
&=\sum_{p=0}^{4k}(-1)^p\chi^p(M)\\
&= \sum_{p=0}^{4k}\chi^p(M) - 2\sum_{p=0}^{2k-1}\chi^{2p+1}(M)\\
&= \chi_1(M) - 2\left[\sum_{p=0}^{k-1}\chi^{2p+1}(M) + \sum_{p = k}^{2k-1}\chi^{2p+1}(M)\right]\\
&= \sigma(M) - 2\left[\sum_{p=0}^{k-1}\chi^{2p+1}(M) + \sum_{p = k}^{2k - 1}(-1)^{4k}\chi^{4k-(2p+1)}(M)\right]\\
&= \sigma(M) - 2\left[\sum_{p=0}^{k-1}\chi^{2p+1}(M) + \sum_{p=k}^{2k-1}\chi^{2(2k-1 -p)+1}(M)\right]\\
&= \sigma(M) - 2\left[\sum_{p=0}^{k-1}\chi^{2p+1}(M) + \sum_{p=0}^{k-1}\chi^{2p+1}(M)\right]\\
&= \sigma(M) - 4\sum_{p=0}^{k-1}\chi^{2p+1}(M).
\end{align*}
Therefore $\chi(M) \equiv \sigma(M) \bmod 4$.

If $m = 2k + 1$ is odd, then $n = 2m = 4k + 2$ so
\begin{align*}
\chi(M) &= \chi_{-1}(M)\\
&= \sum_{p=0}^{4k + 2}(-1)^p\chi^p(M)\\
&= -\sum_{p=0}^{4k+2}\chi^p(M) + 2\sum_{p=0}^{2k+1}\chi^{2p}(M)\\
&= -\chi_1(M) + 2\left[\sum_{p=0}^k\chi^{2p}(M) + \sum_{p=k+1}^{2k+1}\chi^{2p}(M)\right]\\
&= -\sigma(M) + 2\left[\sum_{p=0}^k\chi^{2p}(M) + \sum_{p=k+1}^{2k+1}(-1)^{4k+2}\chi^{4k+2-2p}(M)\right]\\
&= -\sigma(M) + 2\left[\sum_{p=0}^k\chi^{2p}(M) + \sum_{p=k+1}^{2k+1}\chi^{2(2k+1-p)}(M)\right]\\
&= -\sigma(M) + 2\left[\sum_{p=0}^k\chi^{2p}(M) + \sum_{p=0}^k\chi^{2p}(M)\right]\\
&= -\sigma(M) + 4\sum_{p=0}^k\chi^{2p}(M).
\end{align*}
Therefore $\chi(M) \equiv -\sigma(M) \bmod 4$.
\end{proof}


\section{Examples using representation theory}\label{S:examples_spin}


The goal of this section is to find examples of biquotients $G\bq H$ for which $T(G\bq H)\notin \Bv(G\bq H)$. The examples of this section are of the form $H_1\backslash G/H_2$ for certain subgroup inclusions $H_i\subseteq G$, see Subsection~\ref{SS:definitions}. We first fix some notation for the rest of the section.

We let $\Delta S^1\subseteq Spin(2n+2)$ be the image of the lift of the composition $S^1\rightarrow SO(2)\rightarrow SO(2n+2)$, where the first map is a double cover, and the second map is the block diagonal embedding $B\mapsto \diag(B,B, ...,B)$.  

We denote by $\Delta SU(2)\subseteq Spin(4n+4)$ the image of the lift of the composition $SU(2)\rightarrow SO(4)\rightarrow SO(4n+4)$. Here the first map $SU(2)\rightarrow SO(4)$ is the natural inclusion, i.e. the one induced by regarding $\CC^2$ as a real vector space isomorphic to $\RR^4$. The second map $SO(4)\rightarrow SO(4n+4)$ is the block diagonal embedding $A\mapsto \diag(A,A,...,A)$.

Finally, a spin subgroup $Spin(k)$ of a spin group $Spin(k+m)$ will always be understood to be embedded via the lift of the standard block inclusion $A\mapsto \diag(A,1,...,1)$.

We shall begin with the examples of Theorem~\ref{thm:presentations_of_HP}. As shown by Eschenburg \cite[Table 101]{Es84}, the previous inclusions induce well-defined biquotients $\Delta S^1\backslash Spin(2n+2)/Spin(2n+1)$ and $\Delta SU(2)\backslash Spin(4n+4)/Spin(4n+3)$ that are diffeomorphic to $\mathbb{C}P^n$ and $\HH P^n$ respectively. We note that these biquotients are obviously reduced, being quotients of simple groups.  Let $\mathbb{K} \in \{\mathbb{C}, \mathbb{H}\}$.  Except for the case of $\mathbb{C}P^1$,  we will show that $T\mathbb{K}P^n$ is not a biquotient bundle for the above presentations. A more precise version of Theorem~\ref{thm:presentations_of_HP} is given by the following result. 


\begin{theorem}\label{thm:pres_projective_spaces}
Consider the reduced presentations $\mathbb{C}P^n \cong \Delta S^1\backslash Spin(2n+2)/Spin(2n+1)$ with $n\geq 2$ and $\mathbb{H}P^n\cong \Delta SU(2)\backslash Spin(4n+4)/Spin(4n+3)$ with $n\geq 1$. Then $T\mathbb{K}P^n$ is not a biquotient bundle with respect to these presentations.
\end{theorem}

Before giving its proof, we provide two lemmas that will be useful throughout this section.

First, we shall use the following well-known facts about the representations of the group $Spin(2n+1)$. Following {\color{black} Bröcker and tom Dieck's book} \cite{BtD85}, let $\Lambda=\Lambda^1$ be the standard representation, let $\Lambda^i$ the $i$-th exterior product, and let $\Delta$ be the half-spin representation.

\begin{lemma}\label{LEM:rep_of_spin}
For $n \geq 1$, the non-trivial real irreducible representation of $Spin(2n+1)$ of smallest dimension is the standard one $\Lambda^1$, which is of dimension $2n+1$.  {\color{black} For $n\geq 3$ odd, $Spin(4n-1)$ has no other real irreducible representations of dimension at most $8n-4$.} 
\end{lemma}

\begin{proof}
The fundamental irreducible representations are exactly $\Lambda^i$ with $1\leq i \leq n$ and $\Delta$.  It follows from the Weyl dimension formula that the smallest dimensional representations of a simply connected compact Lie group are found among its fundamental representations.


Since $\dim \Lambda^i={2n+1 \choose i}$, it follows that $\dim \Lambda^{i}\leq \dim \Lambda^{i+1}$ for all $i$. Thus, in order to find the smallest and second smallest representations we are left to decide between $\Lambda^1$, $\Lambda^2$ and $\Delta$.

From \cite[Lemma~6.3, p.280]{BtD85}, we know that the representation $\Delta$ of $Spin(2n+1)$ has complex dimension $2^n$.  {\color{black} For $n\geq 3$, it follows easily that $\dim \Delta > \dim \Lambda$.  For $n\leq 2$, the spin representation is not real \cite[Proposition ~6.19, p.290]{BtD85}.  Thus, $\Lambda$ is always the smallest non-trivial real representation of $SO(2n+1)$.} This proves the first part of the statement.


{\color{black} Next, we verify that the second smallest real irreducible representation of $Spin(4n-1)$ has dimension larger than $8n-4$.  For $\Lambda^2$, we have $$\dim \Lambda^2 = {4n-1 \choose 2} = 8n^2-6n+1 > 8n-4$$ for any $n\geq 3$.  For $\Delta$, we have $\dim \Delta = 2^{2n-1} > 8n-4$ for any $n\geq 3.$} 
\end{proof}



Second, we will use the following bundle-theoretic result.

\begin{lemma}\label{LEM:euler_class_odd}
Let $E$ be an orientable vector bundle over a closed manifold $M$ such that all cohomology groups $H^i(M)$ are torsion-free. If $E$ splits as $E=E_1\oplus E_2$  with the subbundle $E_1$ of odd rank, then the Euler class $e(E)$ vanishes.
\end{lemma}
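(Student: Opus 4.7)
The plan is to show that $e(E)$ is $2$-torsion in $H^{\operatorname{rank} E}(M;\mathbb{Z})$; combined with the torsion-freeness hypothesis, this immediately forces $e(E)=0$.

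To extract the $2$-torsion I would exploit the splitting via the bundle automorphism $\phi \colon E \to E$ defined on $E_1 \oplus E_2$ by $\phi(v_1,v_2) \defeq (-v_1,v_2)$. This is a fiber-preserving isomorphism covering $\operatorname{id}_M$. On each fiber $\phi$ is linear with determinant $(-1)^{\operatorname{rank} E_1}\cdot 1 = -1$, since $\operatorname{rank} E_1$ is odd. Hence $\phi$ reverses the orientation of $E$ on every fiber. Note that it is not needed that $E_1$ or $E_2$ be individually orientable; only the given orientation of the total bundle $E$ enters.

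Next I would invoke the naturality of the Euler class. Viewing $\phi$ as an orientation-preserving bundle isomorphism from $E^{\operatorname{op}}$ (that is, $E$ with the reversed orientation) to $E$, naturality yields
$$
e(E^{\operatorname{op}}) \;=\; \operatorname{id}_M^{*} e(E) \;=\; e(E).
$$
On the other hand, reversing the orientation flips the sign of the Euler class, so $e(E^{\operatorname{op}}) = -e(E)$. Combining the two identities gives $2\,e(E)=0$ in $H^{\operatorname{rank} E}(M;\mathbb{Z})$. Since all integral cohomology groups of $M$ are torsion-free, the only $2$-torsion element is zero, and therefore $e(E)=0$.

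The argument is essentially routine; the only mild point requiring care is bookkeeping the orientation in the naturality step, ensuring that one tracks the orientation-reversing automorphism correctly rather than reducing prematurely to a product formula $e(E_1)\cdot e(E_2)$ that would require $E_1$ and $E_2$ to be individually oriented.
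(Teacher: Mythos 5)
Your argument is correct and is essentially identical to the paper's proof: the same automorphism $(v_1,v_2)\mapsto(-v_1,v_2)$, the resulting orientation reversal from the odd rank of $E_1$, the identity $2e(E)=0$, and the torsion-freeness hypothesis to conclude $e(E)=0$. Your remark carefully justifying the orientation bookkeeping is a slightly more detailed account of the same step the paper states briefly.
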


\begin{proof}
If $E = E_1\oplus E_2$ with $E_1$ odd rank, then the map which takes $(e_1,e_2)\mapsto (-e_1,e_2)$ is a bundle isomorphism which reverses orientation. The Euler class satisfies $e(E) = e(-E) = -e(E)$.  Altogether we get that $2e(E) = 0$. The cohomological assumption on $M$ implies that $e(E)=0$.
\end{proof}

We are ready to give the proof of Theorem~\ref{thm:pres_projective_spaces}.  Recall that a manifold $M$ is said to be almost complex if its tangent bundle $TM$ is isomorphic to the realification of a complex vector bundle.

\begin{proof}[Proof of Theorem~\ref{thm:pres_projective_spaces}] Assume for a contradiction that we can find a representation $V$ of $H$ for which $T\mathbb{K}P^n\cong G\times_H V$. Here $$(G,H):= \begin{cases} (Spin(2n+2), Spin(2n+1) \times S^1) & \text{ if }\mathbb{K}=\CC,\\ (Spin(4n+4), Spin(4n+3)\times SU(2)) & \text{ if } \mathbb{K}=\HH.\end{cases}$$


From the first part of Lemma~\ref{LEM:rep_of_spin} we know that the (real) dimension of the smallest non-trivial representation of the $Spin$ factor of $H$ is $2n+1$ for $\mathbb{K}=\CC$ (resp. $4n+3$ for $\mathbb{K}=\HH$), which is strictly larger than $\dim V=\dim \CC P^n=2n$ (resp. $\dim V=\dim \HH P^n = 4n$). In particular, in all cases, the $Spin$ factor of $H$ must act trivially on $V$.  So, we can consider $V$ as a representation of $S^1$ or $SU(2)$, respectively.  


For the case $\mathbb{K} = \mathbb{C}$, the representation $V$ must split as a sum of line bundles, which implies a corresponding splitting of $T\mathbb{C}P^n$. However, there is no such such splitting, as shown by Glover, Homer and Stong \cite[Theorem~1.1.(ii)]{GHS}.

We now turn to the case $\mathbb{K} = \mathbb{H}$. Since the Euler class $e(T\mathbb{H}P^n)$ is non-zero, $T\mathbb{H}P^n$ cannot contain any odd rank subbundles by Lemma~\ref{LEM:euler_class_odd}. Thus, all irreducible subrepresentations of $V$ are even dimensional. Since even dimensional irreducible representation of $SU(2)$ are complex, it follows that $V$ is a complex vector space, so $\mathbb{H}P^n$ is almost complex.  However, Massey \cite{Massey} has shown that $\mathbb{H}P^n$ is not almost complex for any $n\geq 1$.
\end{proof}


Following a similar line of argument, we now give examples for which the tangent bundle is not a biquotient vector bundle for any presentation. 

We denote by $\Delta SU(2)\subseteq Spin(4n+1)$ the image of the lift of the composition $SU(2)\rightarrow SO(4)\rightarrow SO(4n+1)$. Here the first map $SU(2)\rightarrow SO(4)$ is the same as above, while the second map $SO(4)\rightarrow SO(4n+1)$ is the block diagonal embedding $A\mapsto \diag(A,A,...,A,1)$.   Again by work of Eschenburg \cite[Table 101]{Es84}, these inclusions induce biquotients:
$$M^{8n-4} := \Delta SU(2)\backslash Spin(4n+1)/Spin(4n-1).$$

{\color{black} These spaces with $n$ odd are the $(16n+4)$-dimensional examples of Theorem \ref{thm:first_theorem}.  That is, we will show:}

\begin{theorem}\label{thm:examples}
Let $n\geq 3$ odd.  {\color{black}Then $TM\notin\Bv(G\bq H)$ for any presentation of $M$ as a biquotient.} 
\end{theorem}

{\color{black} Towards proving Theorem \ref{thm:examples}, we first observe that it  follows from }Propositions \ref{prop:cover} and \ref{prop:reduced}, that if $TM$ {\color{black} $\in \Bv(G\bq H)$ for some biquotient presentation of $M$, then $TM$ is also a biquotient vector bundle with respect to a }reduced presentation with $G$ a product of a simply connected compact Lie group with a torus.  Further, from \cite[Lemma~3.3]{To02}, the fact that it is reduced implies that the torus factor is trivial. Kapovitch and Ziller showed in \cite[Section~2]{KZ04} that the only reduced presentation $G\bq H$ of $M$ with $G$ simply connected is the one above. We note that Kapovitch and Ziller do not use the notion of ``reduced biquotient'', but their classification of presentations $G\bq H$ with $G$ simple is equivalent to classifying all such reduced presentations. {\color{black} Thus, to prove Theorem \ref{thm:examples}, it is enough to show that $TM\notin \Bv(\Delta SU(2)\backslash Spin(4n+1)/Spin(4n-1)).$}  

\begin{proof} As shown in \cite{KZ04}, $M$ has the same integral cohomology groups of $\mathbb{H}P^{2n-1}$. Since $(8n-4)/2 = 4n-2 \equiv 2\pmod{4}$, it follows that $H^{4n-2}(M)=0$ and hence the signature $\sigma(M)$ vanishes for trivial reasons.  On the other hand, $\chi(M) = 2n$.  Thus, if $n$ is odd, then $\chi(M)\equiv 2\pmod{4}$. In particular, Theorem  \ref{thm:Hirzebruch} implies that $M$ cannot admit an almost complex structure for $n$ odd.

{\color{black} Set $G  = Spin(4n+1)$ and $H = SU(2)\times Spin(4n-1)$ and} assume for a contradiction that {\color{black} there is a representation $V$ of $H = SU(2)\times Spin(4n-1)$ for which  $TM\cong G\times_H V\in \Bv(G\bq H).$} 

From the second part of Lemma~\ref{LEM:rep_of_spin}, we find that the only non-trivial irreducible representations of $Spin(4n-1)$ of dimension at most $8n-4=\dim M^{8n-4}$ is the standard representation $\Lambda^1$ (of real dimension $4n-1$).


Let $W_k$ denote the unique irreducible representation of $SU(2)$ of complex dimension $k$.  Then the irreducible representation $W_k\otimes \Lambda^1$ has dimension $k(4n-1)$.  Hence, if $W_k\otimes \Lambda^1$ is a subrepresentation of $V$, then we must have $k(4n-1) \leq 8n-4$ and hence $k=1$. But if $k=1$, then $TM$ has an odd rank subbundle induced by $W_1\otimes \Lambda^1$. This is a contradiction to Lemma~\ref{LEM:euler_class_odd}, since $\chi(M) = 2n$ and hence $e(TM)\neq 0$.

Thus, $V$ must be a representation of $SU(2)$ only. Again, because $TM$ cannot have any odd rank subbundles, all irreducible subrepresentations of $V$ are even dimensional. Since even dimensional irreducible representations of $SU(2)$ are complex, it follows that $V$ is a complex vector space, so $TM$ is almost complex. This is a contradiction.
\end{proof}


\section{Examples using characteristic classes}\label{S:examples_torus}

In this section, we complete the proof of Theorem \ref{thm:first_theorem} by constructing examples in dimensions $4$ and $6$, including infinitely many homotopy types in dimension $6$.

These biquotients will all be of the form $G\bq T^k$ for some biquotient action of a torus $T^k$ on a semi-simple compact Lie group $G$. The set of biquotient vector bundles over $G\bq T^k$ can be characterized as follows (see~\cite[Section~3]{DG21}).


\begin{proposition}\label{prop:biq_bundles}
A real vector bundle over $G\bq T$ is a biquotient bundle if and only if it is the realification of a sum of complex line bundles, possibly summed with a trivial real vector bundle. 
\end{proposition}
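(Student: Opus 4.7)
The plan is to separate the two implications, reducing each to a standard fact: the representation theory of tori on the one hand, and the classification of complex line bundles over $G\bq T$ on the other.

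For the forward direction, suppose $E = G\times_T V$ is a biquotient bundle associated to a real representation $V$ of $T$. Since $T$ is abelian, every real irreducible representation of $T$ is either the trivial one-dimensional representation or the two-dimensional realification of a nontrivial character $\chi\colon T\to S^1$. Hence $V$ decomposes as $\RR^k \oplus (\CC_{\chi_1}\oplus\cdots\oplus \CC_{\chi_l})_{\RR}$, and applying the functor $G\times_T(-)$ exhibits $E$ as a trivial rank-$k$ bundle summed with the realification of $\bigoplus_i G\times_T \CC_{\chi_i}$, which is a sum of complex biquotient line bundles.

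For the converse, the key point is that every complex line bundle over $G\bq T$ is isomorphic to $G\times_T \CC_\chi$ for some character $\chi$ of $T$. Once this is established, given an expression $E = (L_1\oplus\cdots\oplus L_l)_{\RR}\oplus \underline{\RR}^k$ with $L_i\cong G\times_T \CC_{\chi_i}$, the real representation $V = \RR^k \oplus \bigoplus_i (\CC_{\chi_i})_{\RR}$ realizes $E$ as the biquotient bundle $G\times_T V$, finishing the argument.

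The line-bundle claim amounts to showing that the classifying map $\phi\colon G\bq T \to BT$ of the principal $T$-bundle $T\to G\to G\bq T$ induces a surjection $\phi^*\colon H^2(BT;\ZZ)\to H^2(G\bq T;\ZZ)$, since complex line bundles are classified by $H^2(-;\ZZ)$ and the bundles $G\times_T \CC_\chi$ are precisely the pullbacks of the universal line bundles on $BT$ under $\phi$. This is the step I expect to do the real work. Under the standing assumption that $G$ is simply connected and semi-simple, one has $\pi_1(G) = \pi_2(G) = 0$, so $H^1(G;\ZZ) = H^2(G;\ZZ) = 0$ by Hurewicz and universal coefficients. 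In the Serre spectral sequence of the fibration $G \to G\bq T \to BT$, this forces $E_2^{0,2} = E_2^{1,1} = 0$, whence $H^2(G\bq T;\ZZ)\cong E_\infty^{2,0}$ is a quotient of $E_2^{2,0} = H^2(BT;\ZZ)$. Surjectivity of $\phi^*$ follows, completing the proof.
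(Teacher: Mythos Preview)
Your argument is correct. The paper does not actually prove this proposition; it simply cites \cite[Section~3]{DG21}, so there is no in-paper proof to compare against. Your two-step approach --- decomposing a real $T$-representation into trivial summands and realifications of characters for the forward direction, and establishing surjectivity of $\phi^*\colon H^2(BT;\ZZ)\to H^2(G\bq T;\ZZ)$ via the Serre spectral sequence of $G\to G\bq T\to BT$ for the converse --- is the natural self-contained route and is essentially what the cited reference does.

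One remark worth making explicit: you rightly invoke the hypothesis that $G$ is simply connected, which the statement of the proposition does not spell out (Section~\ref{S:examples_torus} only says $G$ is semi-simple). This hypothesis is genuinely needed for the converse. Indeed, the paper itself exhibits a counterexample without it: the proposition immediately following Proposition~\ref{prop:cover} shows that for the presentation $S^2 = SO(3)/SO(2)$ the rank-$2$ bundle with Euler class $\pm 1$ --- which is the realification of a complex line bundle --- is \emph{not} a biquotient bundle. In your spectral-sequence argument this is exactly where things would break: if $\pi_1(G)\neq 0$ then $H^2(G;\ZZ)$ need not vanish and $\phi^*$ need not be surjective. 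All applications of the proposition in Sections~\ref{S:examples_torus} and~\ref{S:low_dims} do take $G$ simply connected, so the hypothesis you add is the intended one.
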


In this section we will only consider $G\bq T^k$ where $T^k$ is of maximal rank. It follows that its dimension is even, say $2m$, and its Euler characteristic is positive. Moreover, we will consider only biquotient bundles of real rank $2m$. In other words, we only consider (realifications of) Whitney sums $L=\oplus L_i$ of $m$ complex line bundles $L_i$, i.e.~$1\leq i\leq m$. The relevant characteristic classes of the realification $rL$ will be the Pontryagin class $p_1$ and the Euler class $e$, which can be computed as:
\begin{align*}
p_1( rL) & = -c_2(L\oplus \bar L) = \sum_{i=1}^m c_1^2(L_i) && \in H^4(G\bq T^k) \\
e( rL ) & = c_{m}(L)=\prod_{i=1}^{m}c_1(L_i) && \in H^{2m}(G\bq T^k),
\end{align*}
where $\bar L$ denotes the conjugate bundle of $L$ and $c_i$ the $i$-th Chern class.

In the following subsections we will consider biquotients $N:=G\bq T^k$ and prove that $T N$ cannot be isomorphic to $rL$. This will be done in three steps:
\begin{itemize}
\item we compute/collect/recall the necessary topological properties of $N$, including the values of $p_1(N)$ and $\pm e(N)$,
\item we compute $p_1(rL)$ and $e(rL)$ for any $L=\oplus_{i=1}^m L_i$ with $2m=\dim N$,
\item we convert the cohomological equations $p_1(rL) = p_1(TN)$ and $e(rL) = \pm e(TN)$ into numerical equations and show that they have no common solution.
\end{itemize}
We remark that any homogeneous space $G/T^k$ with $T^k$ a maximal torus in $G$ is stably parallelizable and hence $p_1=0$, a fact which no longer holds in general for biquotients $G\bq T^k$.

\subsection{\texorpdfstring{The tangent bundle of $\mathbb{C}P^2\# \mathbb{C}P^2$}{The tangent bundle of CP2 \# CP2}} The manifold $N:=\mathbb{C}P^2\# \mathbb{C}P^2$ is a biquotient of the form $(S^3)^2\bq T^2$ \cite[p.~404]{To02}. Its cohomology ring is given by $H^\ast(N) \cong \mathbb{Z}[u,v]/\langle u^2 - v^2, uv\rangle$ with both $|u|=|v| = 2$. Using the Hirzebruch signature theorem, one finds that $p_1(N)=6u^2$. Since $\chi(N) = 4$, it follows that the Euler class is $e(N) = \pm 4 u^2$.

Assume for a contradiction that $TN \cong r L =r (L_1\oplus L_2)$, where $L_i$ is determined by $c_1(L_i)=a_i u + b_i v$. Then one easily computes:
$$
p_1(r L)=(a_1^2 + b_1^2 + a_2^2 + b_2^2)u^2 \text{ and } e(r L)=(a_1 a_2 + b_1 b_2)u^2.
$$
Since $TN \cong r L$, both $TN$ and $rL$ must have the same first Pontryagin class and the same Euler class and hence we have the two equations
\begin{align*}
6 &= a_1^2 + b_1^2 + a_2^2 + b_2^2\\
\pm 4 & = a_1 a_2 + b_1 b_2.
\end{align*}
From the first equation it follows that, up to permutations and signs, the only integer solution of this equation is $(a_1,b_1,a_2,b_2) = (2,1,1,0)$.  Assume without loss of generality that $b_2 = 0$. Hence the second equation simplifies to $\pm 4 = a_1 a_2$ with $(a_1,a_2)$ given up to permutation and signs as $(2,1)$ or $(1,1)$. In either case, $a_1a_2\neq 4$, so $TN$ is not a sum of line bundles.

Alternatively, one can argue that $TN$ is not a biquotient bundle as follows. If $TN$ was a biquotient bundle, then in particular it would be the realification of a complex vector bundle (see Proposition~\ref{prop:biq_bundles}). In other words, $\mathbb{C}P^2\# \mathbb{C}P^2$ would be an almost complex manifold, which leads to a contradiction as follows. The signature of $\mathbb{C}P^2\# \mathbb{C}P^2$ equals $2$ while its Euler characteristic equals $4$, hence Hirzebruch's Theorem~\ref{thm:Hirzebruch} tells us that $\mathbb{C}P^2\# \mathbb{C}P^2$ cannot admit an almost complex structure.
 
\subsection{\texorpdfstring{The tangent bundle of $SU(3)\bq T^2$}{The tangent bundle of SU(3) // T2}}

Here we consider the inhomogeneous biquotient $N:=SU(3)\bq T^2$, which was discovered by Eschenburg \cite[Section 42]{Es84}. Its cohomology ring can be described as $H^\ast(N,\mathbb{Z})\cong\mathbb{Z}[x,y]/I$ where $I$ is the ideal $I = \langle x^3, y^2 + xy - x^2\rangle$ and with $|x| = |y| = 2$. In particular, $H^4(N,\mathbb{Z})\cong \mathbb{Z}^2$ has generators $x^2$ and $xy$, and $H^6(N,\mathbb{Z})$ has generator $x^2 y$ with $y^3 = 2x^2y$ and $xy^2 = -x^2y$. Further, $p_1(N) = 8x^2$ and $e(N) = \pm 6x^2 y$. All this information can be extracted e.g. {\color{black} from the work of Escher and Ziller \cite[Section~4 and Proposition~5.14]{EZ14}}, where $SU(3)\bq T^2$ corresponds to $N_{-1}$ in their notation. 

Assume for a contradiction that $TN \cong r L =r (L_1\oplus L_2\oplus L_3)$, where $L_i$ is determined by $c_1(L_i) = a_i x + b_i y$. A simple calculation shows that 
\begin{align*}
p_1(rL) &= \sum (a_i^2 + b_i^2)x^2 + \sum (2a_i b_i - b_i^2)xy \text{ and}  \\
e(rL) &=(a_1a_2b_3 + a_1b_2a_3 + b_1a_2a_3 - a_1 b_2 b_3 - b_1a_2 b_3 - b_1b_2a_3 + 2b_1 b_2 b_3) x^2y.
\end{align*}
Since $p_1(rL) = \sum c_1(L_i)^2$ is a sum of squares and $e(rL)$ is only defined up to sign, we may replace any $L_i$ with $\bar{L_i}$ because $c_1(\bar{L_i}) = -c_1(L_i)$.  Thus, we may assume without loss of generality that $b_i\geq 0$ for all $i$. Since $TN\cong rL$ we obtain the numerical equations
\begin{align}
8 &= \sum (a_i^2 + b_i^2) \label{EQ:p_1_SU} \\ 
0 & = \sum (2a_i b_i - b_i^2) \label{EQ:p_11_SU} \\
\pm 6 &= a_1a_2b_3 + a_1b_2a_3 + b_1a_2a_3 - a_1 b_2 b_3 - b_1a_2 b_3 - b_1b_2a_3 + 2b_1 b_2 b_3 \label{EQ:e_SU}
\end{align}
with $b_i\geq 0$ for all $i$.

From \eqref{EQ:p_1_SU} it is easy to see that, up to rearranging the order of the $a_i$ and $b_i$, the only possibilities are a) two of $\{a_i,b_j\}$ are $\pm 2$ with the others all zero, or b)  one of $\{a_i,b_j\}$ is $\pm 2$, four are $\pm 1$, and one is zero.

For a), only two of $\{a_i, b_j\}$ are non-zero.  Looking at \eqref{EQ:e_SU}, all terms in the right hand side involve three distinct elements of $\{a_i,b_j\}$, so all terms are zero and yield the contradiction $\pm 6=0$.

For b), we break into subcases depending on whether some $b_j = 0$ or some $a_i = 0$.  So, first assume some $b_j = 0$, say, $b_1 = 0$.  Then, upon substituting this into \eqref{EQ:e_SU}, we find $\pm 6 = a_1a_2 b_3 + a_1b_2a_3 -a_1b_2b_3$. If, say, $b_2 = \pm 2$, then all $a_i$ as well as $b_3$ must be odd.  Then $a_1a_2 b_3 + a_1b_2a_3 -a_1b_2b_3$ is odd, so not equal to $\pm 6.$  Thus, we must have $b_2 = b_3 = 1$. Then \eqref{EQ:p_11_SU} simplifies to $1 = a_2 +a_3$.  {\color{black} Thus \eqref{EQ:e_SU} becomes 
$$ 
\pm 6 = a_1 a_2 + a_1 a_3 - a_1 = a_1(a_2+a_3-1)= a_1(1-1)=0,$$ 
which is a contradiction.} 

This concludes the subcase with some $b_j = 0$. 

It remains to rule out the subcase where some $a_i = 0$, say $a_1 = 0$. Rewrite \eqref{EQ:p_11_SU} as $\sum b_i^2 = 2 \sum a_i b_i$.  If all $b_i$ are odd, then the left side is odd, while the right side is even. Thus at least one $b_i$ must be even.  If $b_1 = 2$, then $\sum b_i^2 = 2 \sum a_i b_i$ gives $6 = 2 (0\cdot b_1  \pm 1  \pm 1)$, which has no solution. Thus, without loss of generality, we must have $b_2 = 2$. In this case $\sum b_i^2 = 2 \sum a_i b_i$ gives $6 = 2(2a_2 + a_3)$, hence $a_2 = a_3 = 1$ and consequently $b_1=b_3=1$. Substituting all this into \eqref{EQ:e_SU}, we find the contradiction
$$\pm 6 =b_1(a_2a_3 - a_2b_3-b_2a_3+2b_2b_3) = 1-1-2+4.$$


\subsection{\texorpdfstring{The tangent bundle of the $R(p)$ biquotients}{The tangent bundle of the R(p) biquotients}}

For each integer $p$ we define a biquotient action of $T^3$ on $(S^3)^3$ as follows.  Viewing $S^3\subseteq \mathbb{C}^2$ and denoting the $i$-th coordinate of $(S^3)^3$ by $(a_i,b_i)$, the action is defined by the rule: 
$$(w_1,w_2,w_3)\ast(a_1,b_1,a_2,b_2,a_3,b_3) =(w_1a_1,w_1w_2^2b_2,w_2a_2,w_1w_2b_2,w_3a_3,w_1^pw_3b_3).$$
As shown by the second author in \cite[Proposition~4.22]{De17} (see also \cite[Proposition~3.5]{DG21}), the above action is free. For each $p\in\ZZ$, we denote the corresponding biquotient by $R(p)$ (note that it is denoted by $R(p,0)$ in \cite[Proposition~4.23]{De17} and by $R(A)$ in \cite[Section~3]{DG21}, where $A$ equals the matrix $A= \begin{bmatrix} 1 &2 & 0 \\ 1 & 1 & 0 \\ p & 0 & 1 \end{bmatrix}.$)

In \cite[Proposition~3.8]{DG21}, we use the cohomology ring of $H^\ast(R(p))$ to show that as $p$ varies over odd primes, infinitely many homotopy types of biquotients arise.  In the present case, the biquotients of Theorem \ref{thm:first_theorem} will be of the form $R(2q)$, so this result does not directly apply.  However, restricting to the case where $q$ is an odd prime, the proof of \cite[Proposition~3.8]{DG21} carries over verbatim until the very end, when the three equations should be reduced mod $q$ instead of mod $p$ to obtain the same contradiction.  Thus, there are infinitely many homotopy types among these examples.

The following properties can be found in \cite[Section~2.4 and Proposition~4.26]{De17} (see also \cite[Proposition~3.6]{DG21}). The cohomology ring of $R(p)$ is given by $H^*(R(p)) \cong \ZZ [ u_1,u_2,u_3]/  I$ where all $|u_i| = 2$ and $I$ is the ideal $\langle u_1^2 + 2u_1 u_2,  u_2^2 + u_1 u_2, u_i^2 + p u_1 u_3\rangle$. In particular, we observe that $u_2^2 = (u_1 + u_2)^2$ and $u_2 (u_1 + u_2)  =0$ for any $p$. The Pontryagin class of $R(p)$ is given by 
$p_1(R(p))=(-6-2p^2)u_1u_2$. A fundamental class is $u_1 u_2 u_3$ and the Euler class is $e(R(p))=\pm 8 u_1 u_2 u_3$. There are relations in degrees $4$ and $6$ as follows:
\begin{align*}
u_1^2 &=-2u_1u_2 && u_1^3 =u_2^3 =u_1^2u_2 =u_1u_2^2 = 0 \\
u_2^2 &=-u_1u_2 && u_3^2 =-2p^2u_1u_2u_3\\
u_3^2 &= -pu_1u_3 && u_1^2u_3 = -2u_1u_2u_3 \\
&&& u_1u_3^2 =2pu_1u_2u_3\\
&&& u_2^2 u_3 =-u_1u_2u_3\\
&&& u_2 u_3^2 =-pu_1u_2u_3.
\end{align*}

From now on, suppose $p$ is even and $p\geq4$, and write $p=2q$ so that $q\geq 2$. In order to simplify the computations we consider an alternative description of $H^*(R(p))$ determined by the following basis.
\begin{align*}
v_1 &:= u_1 + u_2 \\
v_2 &:= u_2 \\
v_3 &:= u_3 +q(v_1-v_2)=qu_1 + u_3.
\end{align*}
{\color{black} With respect to this new basis, } the Pontryagin class {\color{black} now takes the form } $p_1(R(p))=(6+2p^2)v_1^2=(6+8q^2)v_1^2$. A fundamental class is $v_1^2v_3$ so that $e(R(p))=\pm 8 v_1^2v_3$. The new relations in degree $4$ and $6$ are:
\begin{align*}
v_2^2 & = v_1^2 && v_1^3 =v_2^3=v_1^2v_2=v_1v_2^2=v_1v_2v_3=v_1v_3^2=v_1v_3^2=0\\
v_1v_2&=0  && v_3^3 =2q^2v_1^2v_3\\
v_3^2 &= 2q^2 v_1^2 && v_2^2v_3 = v_1^2 v_3.
\end{align*}

Let $L_i$ be the complex line bundle determined by $a_iv_1+b_iv_2+c_iv_3$ with $1\leq i\leq 3$ and define $L=L_1\oplus L_2\oplus L_3$. The relevant characteristic classes are
\begin{align*}
p_1(rL) =&  \sum(a_i^2 + b_i^2 + 2q^2c_i^2)v_1^2 + 2\sum (a_ic_i) v_1v_3 + 2\sum (b_ic_i)v_2v_3 \text{ and } \\
e(rL) =&  (a_1a_2c_3 + a_1c_2a_3 + c_1a_2a_3 + 2q^2 c_1c_2c_3 + b_1b_2c_3 + b_1c_2b_3 + c_1b_2b_3)v_1^2v_3.
\end{align*}
Assume $TR(p)\cong rL$, so that we arrive at the numerical equations
\begin{align}
\pm 8 &= a_1a_2c_3 + a_1c_2a_3 + c_1a_2a_3 + 2q^2 c_1c_2c_3 + b_1b_2c_3 + b_1c_2b_3 + c_1b_2b_3 \label{EQ:R_p_euler} \\
6+8q^2 &= \sum(a_i^2 + b_i^2 + 2q^2c_i^2)\label{EQ:R_p_pont1} \\
0 &= \sum a_ic_i\label{EQ:R_p_pont2}\\
0 &= \sum b_ic_i.\label{EQ:R_p_pont3}
\end{align}
We can rewrite \eqref{EQ:R_p_pont1} as
$$
2q^2(4- \sum c_i^2) = \sum(a_i^2 + b_i^2) - 6
$$
and since the right hand side is at least $ -6$ it follows that
$$
\sum c_i^2\leq 4 + \frac{3}{q^2}.
$$
Since $q\geq 2$ by assumption and the $c_i$ are integers we obtain that $\sum c_i^2\leq 4$. Hence the only possibilities for $(c_1,c_2,c_3)$ up to ordering and signs are 
$$
(0,0,0),\quad (2,0,0),\quad (1,1,0),\quad (1,1,1),\quad (1,0,0).
$$
The rest of the proof is dedicated to show that none of these possibilities can occur.

\begin{itemize}
\item[$(0,0,0)$:] This case cannot happen since \eqref{EQ:R_p_euler} yields the contradiction $\pm 8=0$.
\item[$(2,0,0)$:] Assume without loss of generality that $c_1=\pm 2$ and $c_2=c_3=0$. Then from \eqref{EQ:R_p_pont2} and \eqref{EQ:R_p_pont3} we get that $a_1=b_1=0$. Then \eqref{EQ:R_p_euler} and \eqref{EQ:R_p_pont1} become:
$$
4= |a_2a_3 + b_2b_3|\qquad 6=a_2^2+a_3^2+b_2^2+b_3^2.
$$
The last equation implies that the unique possibility for $(a_2,a_3,b_2,b_3)$ up to ordering and signs is $(2,1,1,0)$. None of the possible choices \textcolor{black}{satisfy} the equation $4= |a_2a_3 + b_2b_3|$.
\item[$(1,1,0)$:] Assume without loss of generality that $c_1,c_2\in\{\pm 1\}$ and $c_3=0$. Then \eqref{EQ:R_p_euler} becomes:
$$
\pm 8 =  a_3(a_1c_2+ c_1a_2) + b_3(b_1c_2 + c_1b_2).
$$
We shall show both $a_1c_2 + a_2c_1$ and $b_1c_2 + b_2c_1$ are $0$, which gives the contradiction $\pm 8=0$. Let us just focus on $a_1c_2 + a_2c_1$, the other case being essentially the same. If $c_1=c_2$, then \eqref{EQ:R_p_pont2} simplifies to $a_1 = -a_2$, and thus $a_1c_2 + a_2c_1 = a_1c_2 + a_2c_2 = c_2(a_1 + a_2) = 0$. If otherwise $c_1=-c_2$, then \eqref{EQ:R_p_pont2} simplifies to $a_1 = a_2$, thus $a_1 c_2 + a_2 c_1 = a_1 c_2 - a_2 c_2 = c_2(a_1 - a_2) = 0$. This completes the proof.


\item[$(1,1,1)$:] We have that $c_i\in \{\pm 1\}$, so that $\frac{1}{c_i} = c_i$ and $c_i^2 = 1$. Then the equations \eqref{EQ:R_p_pont2} and \eqref{EQ:R_p_pont3} give 
$$a_3 = -c_3(c_1 a_1 + c_2 a_2) \text{ and } b_3 = -c_3(c_1 b_1 + c_2 b_2).$$
Set $S = a_1^2 + a_2^2 + b_1^2 + b_2^2$. Substituting this in \eqref{EQ:R_p_pont1} we get 
\begin{align*} 6 + 8q^2 &= S + a_3^2 + b_3^2 + 6q^2\\ &= S + a_1^2 + 2c_1c_2a_1 a_2 + a_2^2 + b_1^2 + 2c_1c_2 b_1b_2 + b_2^2+ 6q^2 \\  &= 2S + 2c_1c_2(a_1a_2 + b_1b_2)+ 6q^2.
\end{align*}
It follows that $c_1c_2 (3+q^2 - S) = a_1 a_2+b_1b_2$. Together with the fact that $(c_1 a_1 + c_2 a_2)(c_2 a_1 + c_1a_2) = c_1c_2(a_1^2+ a_2^2) + 2a_1 a_2$, equation \eqref{EQ:R_p_euler} can be written as:
\begin{align*} \pm 8 &= c_3(a_1 a_2 + b_1b_2)  -c_3(c_1a_1+ c_2 a_2)(c_2 a_1 c_1 a_2) -c_3(c_1 b_1 + c_2b_2)(c_2 b_1 + c_1 b_2) + 2q^2 c_1 c_2 c_3\\ &= c_3(a_1 a_2 + b_1 b_2)  -c_3 c_1 c_2 S - 2c_3(a_1 a_2 + b_1 b_2) + 2q^2 c_1 c_2 c_3 \\ &= -c_1(a_1 a_2 + b_1 b_2) -c_1c_2c_3(S-2q^2)\\ &= -c_3c_1c_2(3+q^2 - S) - c_1c_2c_3(S-2q^2)\\ &= -c_1c_2c_3(3+q^2 - S + S-2q^2)\\ &= -c_1 c_2 c_3(3-q^2). 
\end{align*}
Thus, $q^2 = 3\pm 8$, giving an obvious contradiction.

\item[$(1,0,0)$:]  Assume without loss of generality that $c_1=\pm 1$ and $c_2=c_3=0$. Then from the equations \eqref{EQ:R_p_pont2} and \eqref{EQ:R_p_pont3} we get that $a_1=b_1=0$, and the equations  \eqref{EQ:R_p_euler} and \eqref{EQ:R_p_pont1} become:
$$
8= |a_2a_3 + b_2b_3|\qquad 6(q^2+1)=a_2^2+a_3^2+b_2^2+b_3^2.
$$
Combining both equations we get that $(a_2+a_3)^2 +(b_2 + b_3)^2 = 6q^2 + 6\pm 16$.  We find a contradiction by reducing mod $16$. To that end, note that a square mod $16$ equals $0,1,4,9$. Thus 
$$6q^2 + 6 \pm 16 \in \{ 6, 12, 30, 60\} = \{6,12,14\} \mod 16$$
On the other hand, by inspection, the sum $(a_2+a_3)^2 +(b_2 + b_3)^2$ of two squares mod $16$ cannot be any of $6,12$ or $14$.

\end{itemize}








\subsection{Uniqueness of reduced presentations}

In the previous subsections we have shown that for one specific presentation of each of $\mathbb{C}P^2\# \mathbb{C}P^2$, $SU(3)\bq T^2$ and the $R(p)$ biquotients, the tangent bundle is not a biquotient vector bundle.  We conclude this section with a proof that the tangent bundle cannot be a biquotient vector bundle for any presentation.

\begin{proposition}\label{prop:no_pres}
The tangent bundle of  $\mathbb{C}P^2\# \mathbb{C}P^2$, $SU(3)\bq T^2$ or the $R(p)$ biquotients with $p$ even is not a biquotient vector bundle for any presentation as a biquotient.
\end{proposition}

\begin{proof}
By Propositions~\ref{prop:cover} and~\ref{prop:reduced} it is enough to consider reduced presentations $G\bq H$ with $G$ simply connected.

For the biquotient presentations $G\bq H$ in the previous subsections, the proof that $T(G\bq H)$ is not a biquotient vector bundle for the standard presentation was done by showing that $T(G\bq H)$ is not a sum of complex line bundles.  {\color{black} From Proposition \ref{prop:biq_bundles}, it follows that $TM\notin\Bv(G\bq H)$ for any presentation with $H$ a torus.  Thus, to prove Proposition \ref{prop:no_pres}, it is sufficient to show that for any reduced presentation $G\bq H$, the group $H$ must be a torus.} 



For $\mathbb{C}P^2\# \mathbb{C}P^2$ the work of the second author \cite[Theorem 1.1]{De14} gives that the only reduced presentation has $H = T^2$.

For $SU(3)\bq T^2$,  
  notice that this space has the rational homotopy groups of $S^2\times \mathbb{C}P^2$, but that $\pi_4(SU(3)\bq T^2)$ is trivial. As shown by the second author in \cite[Proposition 4.13]{De17}, apart from quotients of $SU(3)$, any biquotient with the rational homotopy groups of $S^2\times \mathbb{C}P^2$ is a quotient of $S^3\times S^5$ by a $T^2$-action.  In particular, such examples have $\pi_4\cong \pi_4(S^3)\cong \mathbb{Z}/2\mathbb{Z}$.  Thus, the only reduced presentations of $SU(3)\bq T^2$ as $G\bq H$ have $G = SU(3)$ and $H = T^2$. 

For the $R(p)$ biquotients, \cite[Proposition 3.14 Case 1]{De17} implies that any reduced presentation must have $G = (S^3)^3$ and $H = T^3$.

\end{proof}

\section{Low dimensions}\label{S:low_dims}

This section contains the proof of Theorem~\ref{thm:dims235}. As mentioned in the Introduction, a simply connected closed biquotient of dimension at most $5$ is diffeomorphic to one of the following: $S^2,S^3,S^4$, $\mathbb{C}P^2$, $S^2\times S^2$, $\mathbb{C}P^2\# \pm\mathbb{C}P^2$, $S^5$, $S^2\times S^3$, $SU(3)/SO(3)$ or $S^3\,\widehat{\times}\, S^2$.

The fact that the tangent bundle to $\mathbb{C}P^2\# \mathbb{C}P^2$ is not a biquotient bundle for any presentation is Proposition~\ref{prop:no_pres}. The fact that $\mathbb{C}P^2$ and $\mathbb{H}P^1 = S^4$ have reduced presentations for which the tangent bundle is not a biquotient vector bundle is Theorem~\ref{thm:pres_projective_spaces}.  Thus, we must show for the remaining examples $G\bq H$ that every reduced  presentation with $G$ simply connected, $T(G\bq H)\in \Bv(G\bq H)$.   

For $S^5$ and $SU(3)/SO(3)$, it follows from \cite{KZ04,De14} that the only presentations as reduced biquotients are homogeneous, where the result is clear.

In all remaining cases, by inspection, the only non-trivial rational homotopy groups occur in degrees $2$ and $3$. From \cite[Corollary  3.5]{De17}, the fact that $G\bq H$ is reduced implies that $H$ is a torus.  From Proposition~\ref{prop:biq_bundles}, $T(G\bq H)$ is a biquotient bundle if and only if it is a sum of complex line bundles and trivial real bundles.  For $S^2 = \mathbb{C}P^1$, $S^3$ and $S^2\times S^2$, this is obviously the case.  For $\mathbb{C}P^2\# - \mathbb{C}P^2$, recall that this space is naturally an $S^2$-bundle over $S^2$.   This induces a splitting of $T(\mathbb{C}P^2 \# - \mathbb{C}P^2)$ into (real) rank $2$ subbundles, i.e., a splitting into a sum of complex line bundles.


For $S^3\,\widehat{\times}\, S^2$ recall that it has the integral cohomology ring of $S^3\times S^2$. Denote its tangent bundle by $T$ and recall that $w_2(T)$ is the unique non-zero element in $H^2(S^3\,\widehat{\times}\, S^2;\ZZ_2)$. Take any complex line bundle $L$ with $c_1(L)$ an odd multiple of a generator of $H^2(S^3\,\widehat{\times}\, S^2)=\ZZ$, so that $w_2(rL)$ of the realification $rL$ is non-trivial. Denote by $3$ the trivial real bundle of rank $3$. It clearly follows that $w_2(rL\oplus 3)=w_2(T)\neq 0$.  As $H^4(S^3\,\widehat{\times}\, S^2) = 0$, the characteristic classes $w_4$ and $p_1$ of $rL\oplus 3$ and $T$ trivially coincide. It follows from work of Čadek and Vanžura \cite[Theorem~1(ii) and Remark p. 755]{CV93} that $rL\oplus 3$ is isomorphic to $T$. By Proposition~\ref{prop:biq_bundles}, $rL\oplus 3$ is a biquotient bundle and hence so is $T$.



\begin{thebibliography}{10}

\bibitem[AGZ22]{AGZ} Amann, Manuel; González-Álvaro, David; Zibrowius, Marcus. Vector bundles of non-negative curvature over cohomogeneity one manifolds. Adv. Math. 405 (2022), Paper No. 108477, 51 pp.

\bibitem[BtD85]{BtD85} Bröcker, Theodor; tom Dieck, Tammo. Representations of compact Lie groups. Graduate Texts in Mathematics, 98. Springer-Verlag, New York, 1985.

\bibitem[CV93]{CV93} Čadek, Martin; Vanžura, Jiří. On the classification of oriented vector bundles over $5$-complexes. Czechoslovak Math. J. 43(118) (1993), no. 4, 753--764.

\bibitem[CG72]{CG72} Cheeger, Jeff; Gromoll, Detlef. On the structure of complete manifolds of nonnegative curvature. Ann. of Math. (2) 96 (1972), 413--443.

\bibitem[De14]{De14} DeVito, Jason. The classification of compact simply connected biquotients in dimensions 4 and 5. Differential Geom. Appl. 34 (2014), 128--138.

\bibitem[De17]{De17} DeVito, Jason.  The classification of compact simply connected biqoutients in dimension 6 and 7.  Math. Ann. 368 (2017), no. 304, 1493--1541.

\bibitem[DG21]{DG21} DeVito, Jason; Gonz\'alez-\'Alvaro, David. Biquotient vector bundles with no inverse. Math. Z. 302 (2022), no. 2, 1267--1278.

\bibitem[Es84]{Es84} Eschenburg, Jost-Heinrich. Freie isometrische aktionen auf kompakten Lie-gruppen mit positiv gekrümmten orbiträumen. Schriften der Math. Universität Münster, 32, 1984.

\bibitem[Es92]{Es1} Eschenburg, Jost-Heinrich.  Cohomology of biquotients.  Manuscripta Math.  75 (1992) 151--166.

\bibitem[EZ14]{EZ14} Escher, Christine; Ziller, Wolfgang. Topology of non-negatively curved manifolds. Ann. Global Anal. Geom. 46 (2014), no. 1, 23--55.

\bibitem[Ga97]{Gau97} Gauduchon, Paul. Hermitian connections and Dirac operators. Boll. Un. Mat. Ital. B (7) 11 (1997), no. 2, suppl., 257--288.

\bibitem[GZ11]{GroveZiller} Grove, Karsten; Ziller, Wolfgang.  Lifting group actions and nonnegative curvature.  Trans. Amer. Math. Soc. 363 (2011), no. 6, 2865--2890.

\bibitem[GHS82]{GHS} Glover, Henry H.; Homer, William D.; Stong, Robert E.  Splitting the Tangent Bundle of Projective Space.  Indiana Univ. Math. J.  81 (1982), no. 2, 161--166.

\bibitem[Gr67]{Gr67} Gray, Alfred. Pseudo-Riemannian almost product manifolds and submersions. J. Math. Mech. 16 (1967), 715--737. 

\bibitem[He05]{He05} Hepworth, Richard. Generalized Kreck-Stolz invariants and the topology of certain $3$-Sasakian $7$-manifolds. Ph.D. thesis at the University of Edinburgh (2005), available at \url{https://era.ed.ac.uk/handle/1842/15011}.

\bibitem[Hi66]{Hir} Hirzebruch, F., Borel, A. and Schwarzenberger, R.L.E. Topological methods in algebraic geometry (Vol. 175). Springer, Berlin, 1966.

\bibitem[Hi87]{HirCW} Hirzebruch, F. Gesammelte Abhandlungen, Band I 1951--1962 (Collected Papers, Volume 1 - 1951--1962). Springer-Verlag, 1987.

\bibitem[Hu06]{Huy} Huybrechts, D. Complex geometry: an introduction. Springer Science \& Business Media, 2006.

\bibitem[KZ04]{KZ04} Kapovitch, Vitali; Ziller, Wolfgang. Biquotients with singly generated rational cohomology. Geom. Dedicata 104 (2004), 149--160. 

\bibitem[KZ06]{KZ06} Kerin, Martin; Ziller, Wolfgang. On Eschenburg’s Habilitation on biquotient. Lecture notes, 2006, available at \url{https://www2.math.upenn.edu/~wziller/papers/SummaryEschenburg.pdf}.

\bibitem[Ke11]{Ke11} Kerin, Martin. Some new examples with almost positive curvature. Geom. Topol., (2011), 15:217--260.

\bibitem[LM89]{LM} Lawson, H.B. and Michelsohn, M.L.. Spin geometry (pms-38) (Vol. 38). Princeton university press, Princeton, NJ, 1989.

\bibitem[Ma62]{Massey} Massey, W. S. Non-existence of almost-complex structures on quaternionic projective spaces.  Pacific J. Math.  12 (1962), no. 4, 1379--1384.

\bibitem[Mi08]{Michor} Michor, P.  Topics in Differential Geometry (Graduate Studies in Mathematics), American Mathematical Society, Providence, RI, 2008.

\bibitem[Mo96]{M} Morgan, J.W. The Seiberg-Witten Equations and Applications to the Topology of Smooth Four-Manifolds (Vol. 44). Princeton University Press, 1996.

\bibitem[On66]{On1} B. O’Neill. The fundamental equations of a submersion. Michigan Math.
J., (1966) 13:459--469.

\bibitem[Pa04]{Pa04} Pavlov, A. V.  Five-dimensional biquotients.  Siberian Mathematical Journal.  45 (2004),  no. 6, 1080--1083.

\bibitem[Si93]{Si93} Singhof, W. On the topology of double coset manifolds. Math. Ann. (1993), 
297:133--146.


\bibitem[To02]{To02} Totaro, Burt. Cheeger manifolds and the classification of biquotients. J. Differential Geom. 61 (2002), no. 3, 397--451.

\end{thebibliography}
\end{document}